\renewcommand{\epsilon}{\varepsilon}
\newcommand{\R}{\ensuremath{\mathbb{R}}}
\newcommand{\Z}{\ensuremath{\mathbb{Z}}}
\newcommand{\CC}{\mathcal{C}}
\newcommand{\CO}{\ensuremath{\mathcal{O}}}
\newcommand{\ov}{\overline}
\newcommand{\T}{\theta}
\newcommand{\f}{\varphi}
\newcommand{\al}{\alpha}
\newcommand{\be}{\beta}
\newcommand{\s}{\ensuremath{\mathbb{S}}}
\newcommand{\CA}{\ensuremath{\mathcal{A}}}
\newcommand{\CB}{\ensuremath{\mathcal{B}}}
\newcommand{\C}{\ensuremath{\mathcal{C}}}
\newcommand{\U}{\ensuremath{\mathcal{U}}}
\newcommand{\bx}{\mathbf{x}}
\newcommand{\by}{\mathbf{y}}
\newcommand{\bz}{\mathbf{z}}
\newcommand{\de}{\delta}
\newcommand{\bxi}{\boldsymbol \xi}
\newcommand{\bg}{{\bf g}}
\newcommand{\bF}{{\bf F}}
\def\p{\partial}
\def\e{\varepsilon}
\newtheorem {theorem} {Theorem} 
\newtheorem {proposition} {Proposition}
\newtheorem {lemma} {Lemma}
\newtheorem {remark} {Remark}
\newtheorem {mtheorem} {Theorem}
\begin{document}
\allowdisplaybreaks
\title[On the torus bifurcation in averaging theory]{On the torus bifurcation in averaging theory}

\begin{abstract}
In this paper, we take advantage of the averaging theory to investigate a torus bifurcation in two-parameter families of $2D$ nonautonomous differential equations. Our strategy consists in looking for generic conditions on the averaged functions that ensure the existence of a curve in the parameter space characterized by a Neimark-Sacker bifurcation in the corresponding Poincar\'{e} map. A Neimark-Sacker bifurcation for planar maps consists in the birth of an invariant closed curve from a fixed point, as the fixed point changes stability. In addition, we apply our results to study a torus bifurcation in a family of $3D$ vector fields.
\end{abstract}

\author{Murilo R. C\^{a}ndido and Douglas D. Novaes}

\address{$^1$ Departamento de Matem\'{a}tica, Universidade
Estadual de Campinas, Rua S\'{e}rgio Baruque de Holanda, 651, Cidade Universit\'{a}ria Zeferino Vaz, 13083-859, Campinas, SP,
Brazil} 
\email{candidomr@ime.unicamp.br}
\email{ddnovaes@ime.unicamp.br}

\keywords{averaging theory, invariant torus, periodic solution, Hopf bifurcation, Naimark-Sacker bifurcation}

\subjclass[2010]{Primary: 34C23, 34C29, 34C45}


\maketitle

\section{Introduction  and statements of the main result}
In the present study, we consider two-parameter families of nonautonomous differential equations given by
\begin{equation}\label{tm1}
\dot \bx= \e \bF_1(t,\bx;\mu)+\e^2 \widetilde{\bf F}(t,\bx;\mu,\e).
\end{equation}
Here, $\bF_1$ and $\widetilde\bF$ are $\C^1$ functions $T$-periodic in the variable $t\in\R,$  $\bx=(x,y)\in\Omega$ with $\Omega$ an open bounded subset of $\mathbb{R}^2,$ $\e \in (-\e_0,\e_0)$ for some $\e_0>0$ small, and $\mu \in \R.$  Throughout in this paper, we shall consider the differential equation \eqref{tm1} defined in the extended phase space $\s^1\times \Omega$ by taking $\dot t=1,$ where $\s^1=\R/T \Z.$

Detecting limit sets of differential equations is a major problem in the qualitative theory of dynamical systems. In particular, there are several research pieces  dealing with the existence of limit cycles for differential equations of kind \eqref{tm1}. In this direction, the {\it averaging theory} (see \cite{SVM} and \cite[Chapter $11$]{V}) is one of the most used methods. In short, this theory provides a sequence of functions $\bg_i,$ $i=1,2,\ldots,k,$ each one called {\it $i$-th order averaged function}, which ``control'' the bifurcation of isolated periodic solutions of \eqref{tm1} (see Appendix). The first averaged function can be defined as
$$
\bg_1(\bx;\mu)=\big(\bg_1^1(\bx;\mu),\bg_1^2(\bx;\mu)\big)=\int_0^T\bF_1(t,\bx;\mu)dt,
$$
which provides the so-called {\it averaged system}
\begin{equation}\label{tas}
\dot{\bx}=\e\bg_1(\bx;\mu).
\end{equation}

When dealing with invariant sets of differential equations, the Poincar\'e map $P:\Sigma\rightarrow\Sigma$ is a classic tool in understanding their properties.  It is defined in a transversal section of the orbits $\Sigma,$ which leads to a dimensional reduction of the problem. In turn, this provides conceptual clarity for many notions that are somewhat cumbersome to state for differential equations. For instance, a periodic orbit of a differential equation corresponds to a fixed point of a Poincar\'{e} map and, consequently, the notion of orbital stability is reduced to the stability of a fixed point (see \cite[Chapter 10]{wiggins}). As another example, an invariant torus of a differential equation corresponds to an invariant closed curve $\gamma\subset\Sigma$ of a Poincar\'{e} map $P,$ that is, $P(\gamma)=\gamma.$

No general method exists for constructing Poincar\'e maps of arbitrary differential equations. Nevertheless, if a nonautonomous $T$-periodic differential equation $\dot\bx=F(t,\bx)$ admits, for each initial condition $\bx \in\Omega,$ a unique solution $\f(t,\bx)$ defined for every $t\in[0,T]$ and satisfying $\f(0,\bx)=\bx,$ then the Poincar\'{e} map defined in the tranversal section $\Sigma=\{0\}\times\Omega$ is given by $P(\bx)=\f(T,\bx).$ Indeed, since $F$ is $T$-periodic in the time variable $t,$ we can take $\dot t=1$ and consider the differential equation defined in the extended phase space $\s^1\times\Omega.$ In this way, $P$ maps $\Sigma$ onto itself, which is identified with $\Omega.$ When $F$ is given as \eqref{tm1}, Lemma \ref{lap} from Appendix (see \cite[Lemma $1$]{LNT}) provides the Taylor expansion  of the Poincar\'{e} map $P(\bx)=P(\bx;\mu,\e)$ around $\e=0.$ The coefficients of this expansion determine the averaged functions $\bg_i$'s.

One can find results in research literature corelating the existence of invariant tori of the differential equation \eqref{tm1} with Hopf bifurcation of the averaged system \eqref{tas}. This fact is briefly commented on \cite[Appendix $C.5$]{SVM}. Similar results can be found in  \cite[Section $4.C$]{lhopf} and \cite[Chapter $2$]{thesis}.

The main goal of this paper is to provide generic conditions on the averaged functions $\bg_i$'s
to guarantee the existence of a codimension-one bifurcation curve $\mu(\e)$ in the parameter space $(\mu,\e)$ characterized by the birth of an invariant torus of \eqref{tm1}  from a periodic solution. 

Our strategy consists in looking for conditions that ensure the existence of a {\it Neimark-Sacker Bifurcation} (see \cite{N,S1,S2}) in the Poincar\'{e} map of \eqref{tm1}. In discrete dynamical system theory, a Neimark-Sacker bifurcation in a one-parameter family of planar maps is characterized by the birth of an invariant closed curve from a fixed point, as the fixed point changes stability.
Since invariant closed curves of Poincar\'{e} maps correspond to  invariant tori of differential equations, a Neimark-Sacker bifurcation in Poincar\'{e} maps is called  {\it Torus Bifurcation}. We shall discuss this bifurcation in Section \ref{NSB}.

\subsection{Setting of the problem} 
In what follows, we shall assume that $(\bx_{\mu_0};\mu_0)$ is a {\it Hopf point} of the averaged system \eqref{tas}, that is, $\bg_1(\bx_{\mu_0};\mu_0)=0$ and  the Jacobian matrix $D_\bx\bg_1(\bx_{\mu_0};\mu_0)$ has a pair of conjugated purely imaginary eigenvalues $\pm i\omega_0$ $(\omega_0>0).$ By the Implicit Function Theorem, there exists a continuous curve $\mu\mapsto\bx_\mu \in \Omega,$ defined in an interval $J\ni \mu_0,$ such that $\bg_1(\bx_\mu;\mu)=0$ for every $\mu\in J.$ Clearly, the pair of complex conjugated eigenvalues  $\al(\mu)\pm i \beta(\mu)$ of the Jacobian matrix $D_\bx\bg_1(\bx_\mu;\mu)$ satisfies $\al(\mu_0)=0$ and $\beta(\mu_0)=\omega_0>0.$  Through a linear change of variables, we can assume that $D_\bx\bg_1(\bx_{\mu_0};0)$ is in its real Jordan normal form. Thus, the existence of a Hopf point $(x_{\mu_0};\mu_0)$ is equivalent to the following assumption.
\begin{itemize}
\item[{\bf A1.}]  {\it There exists a continuous curve $\mu\in J \mapsto\bx_\mu \in \Omega,$ defined in an interval $J\ni \mu_0,$ such that $\bg_1(\bx_\mu;\mu)=0$ for every $\mu\in J\subset\R,$  the pair of complex conjugated eigenvalues  $\al(\mu)\pm i \beta(\mu)$ of $D_\bx\bg_1(\bx_\mu;\mu)$  satisfies $\al(\mu_0)=0$ and $\beta(\mu_0)=\omega_0,$ and  $D_\bx\bg_1(\bx_{\mu_0};0)$ is in its real Jordan normal form.}
\end{itemize}

As a first consequence of hypothesis {\bf A1}, the next lemma ensures the existence of a periodic solution of the differential equation \eqref{tm1}.

\begin{lemma}\label{l1}
 Assume that hypothesis {\bf A1} holds. Then, there exist a neighborhood $J_0\subset J$ of $\mu_0$ and $\e_1,$ $0<\e_1<\e_0$ such that, for every $(\mu,\e)\in J_0\times(-\e_1,\e_1),$ the differential equation \eqref{tm1} admits a unique $T$-periodic solution $\f(t;\mu,\e)$ satisfying $\f(0;\mu,\e)\to \bx_{\mu}$ as $\e\to0.$
\end{lemma}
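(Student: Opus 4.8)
The plan is to apply the standard averaging machinery (Lemma \ref{lap} from the Appendix, i.e.\ \cite[Lemma 1]{LNT}) to reduce the existence of a $T$-periodic solution of \eqref{tm1} to finding a simple zero of a displacement map, and then to produce that zero via the Implicit Function Theorem using hypothesis {\bf A1}. First I would recall that, since $\bF_1$ and $\wt\bF$ are $\C^1$ and $T$-periodic in $t$, for $\e$ small enough every initial condition $\bx$ in a suitable open set has a solution $\f(t,\bx;\mu,\e)$ defined on $[0,T]$ depending $\C^1$-smoothly on $(\bx,\mu,\e)$; hence the Poincar\'e map $P(\bx;\mu,\e)=\f(T,\bx;\mu,\e)$ on $\Sigma=\{0\}\times\Omega$ is well defined and $\C^1$. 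A $T$-periodic solution corresponds exactly to a fixed point of $P$, i.e.\ a zero of the displacement map $D(\bx;\mu,\e):=P(\bx;\mu,\e)-\bx$.

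Next I would use the Taylor expansion of $P$ around $\e=0$ furnished by Lemma \ref{lap}: $P(\bx;\mu,\e)=\bx+\e\, \bg_1(\bx;\mu)+\CO(\e^2)$, so that $D(\bx;\mu,\e)=\e\big(\bg_1(\bx;\mu)+\e\, R(\bx;\mu,\e)\big)$ with $R$ of class $\C^1$. For $\e\neq0$ the zeros of $D$ coincide with the zeros of the $\C^1$ map $H(\bx;\mu,\e):=\bg_1(\bx;\mu)+\e\, R(\bx;\mu,\e)$, and for $\e=0$ we set $H(\bx;\mu,0)=\bg_1(\bx;\mu)$, which is still $\C^1$ across $\e=0$. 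Now I would fix $\mu=\mu_0$ and invoke {\bf A1}: $H(\bx_{\mu_0};\mu_0,0)=\bg_1(\bx_{\mu_0};\mu_0)=0$, and $D_\bx H(\bx_{\mu_0};\mu_0,0)=D_\bx\bg_1(\bx_{\mu_0};\mu_0)$ has eigenvalues $\pm i\omega_0$ with $\omega_0>0$, hence is invertible. By the Implicit Function Theorem applied to $H$ in the variables $(\bx)$ with parameters $(\mu,\e)$, there exist a neighborhood $J_0\times(-\e_1,\e_1)$ of $(\mu_0,0)$ and a unique $\C^1$ function $(\mu,\e)\mapsto\bx(\mu,\e)$ with $\bx(\mu_0,0)=\bx_{\mu_0}$ and $H(\bx(\mu,\e);\mu,\e)=0$. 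Setting $\f(t;\mu,\e):=\f(t,\bx(\mu,\e);\mu,\e)$ gives the desired $T$-periodic solution.

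Finally I would check the limit and uniqueness claims. Continuity of $\bx(\mu,\e)$ in $\e$ gives $\f(0;\mu,\e)=\bx(\mu,\e)\to\bx(\mu,0)$ as $\e\to0$; and $H(\bx(\mu,0);\mu,0)=\bg_1(\bx(\mu,0);\mu)=0$ together with the local uniqueness of the zero of $\bg_1$ near $\bx_{\mu_0}$ (which is itself the Implicit-Function-Theorem curve $\mu\mapsto\bx_\mu$ from {\bf A1}, using that $D_\bx\bg_1$ stays invertible near $\mu_0$) forces $\bx(\mu,0)=\bx_\mu$, so $\f(0;\mu,\e)\to\bx_\mu$ as $\e\to0$. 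Shrinking $J_0$ and $\e_1$ if necessary so that $D_\bx\bg_1(\bx_\mu;\mu)$ is invertible for all $\mu\in J_0$, the local uniqueness in the Implicit Function Theorem yields the uniqueness of the $T$-periodic solution in a neighborhood of $\bx_\mu$. The main obstacle, though mild, is bookkeeping the several nested shrinkings of the $(\mu,\e)$-domain — one shrinking to guarantee solutions exist on $[0,T]$ and the expansion of Lemma \ref{lap} is valid, and another to run the Implicit Function Theorem — and making sure these are compatible with $J_0\subset J$; none of this is conceptually hard, but it must be stated cleanly.
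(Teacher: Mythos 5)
Your proposal is correct and follows essentially the same route as the paper: pass to the Poincar\'e map, divide the displacement by $\e$ to obtain $\bg_1(\bx;\mu)+\e R(\bx;\mu,\e)$, and apply the Implicit Function Theorem using that $D_\bx\bg_1(\bx_{\mu_0};\mu_0)$ is invertible because its eigenvalues $\pm i\omega_0$ are nonzero. The only cosmetic difference is that the paper runs the Implicit Function Theorem along the whole curve $\mu\mapsto\bx_\mu$ over the compact set $\ov{J_0}$ to get a uniform $\e_1$, whereas you apply it at the single point $(\bx_{\mu_0};\mu_0,0)$ and recover $\bx(\mu,0)=\bx_\mu$ by uniqueness; both are fine.
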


Lemma \ref{l1} is proven in Section \ref{PTA}.  We notice that, when  the differential equation \eqref{tm1} is defined in the extended phase space $\s^1\times\Omega,$  such a periodic solution is given by $\Phi(t;\mu,\e)=(t,\f(t;\mu,\e)).$

We also assume the following traversal hypothesis.

\begin{itemize}
\item[{\bf A2.}] {\it  Let $\al(\mu)\pm i \beta(\mu)$ be the pair of complex conjugated eigenvalues of $D_\bx\bg_1(\bx_\mu;\mu)$ such that $\al(\mu_0)=0,$ $\beta(\mu_0)=\omega_0>0.$ Assume that $\al'(\mu_0)\neq0$.}
\end{itemize}

Finally, define the number
\begin{equation}\label{l1TA}
\begin{array}{rl}
\ell_{1,1}&=\dfrac{1}{8}\left(\dfrac{\partial ^3\bg_1^1(\bx_{\mu_0};\mu_0)}{\partial x^3}+\dfrac{\partial ^3\bg_1^1(\bx_{\mu_0};\mu_0)}{\partial x\partial y^2}+\dfrac{\partial ^3\bg_1^2(\bx_{\mu_0};\mu_0)}{\partial x^2\partial y}+\dfrac{\partial ^3\bg_1^2(\bx_{\mu_0};\mu_0)}{\partial y^3}\right)\\
&+\dfrac{1}{8\omega_0}\left(\dfrac{\partial ^2\bg_1^1(\bx_{\mu_0};\mu_0)}{\partial x\partial y}\Big(\dfrac{\partial ^2\bg_1^1(\bx_{\mu_0};\mu_0)}{\partial x^2}+\dfrac{\partial ^2\bg_1^1(\bx_{\mu_0};\mu_0)}{\partial y^2}\Big)-\dfrac{\partial ^2\bg_1^2(\bx_{\mu_0};\mu_0)}{\partial x\partial y}\right.\\
&\Big(\dfrac{\partial ^2\bg_1^2(\bx_{\mu_0};\mu_0)}{\partial x^2}+\dfrac{\partial ^2\bg_1^2(\bx_{\mu_0};\mu_0)}{\partial y^2}\Big)-\dfrac{\partial ^2\bg_1^1(\bx_{\mu_0};\mu_0)}{\partial x^2}\dfrac{\partial ^2\bg_1^2(\bx_{\mu_0};\mu_0)}{\partial x^2}\\
&\left.+\dfrac{\partial ^2\bg_1^1(\bx_{\mu_0};\mu_0)}{\partial y^2}\dfrac{\partial ^2\bg_1^2(\bx_{\mu_0};\mu_0)}{\partial y^2}\right).
\end{array}
\end{equation}
It is worth mentioning that $\e \,\ell_{1,1}$ is the first Lyapunov coefficient of the averaged system \eqref{tas} at  $(\bx_{\mu_0};\mu_0).$ Thus, {\bf A1}, {\bf A2}, and $\ell_{1,1}\neq0$ characterize a {\it Hopf Bifurcation} in the averaged system \eqref{tas} (see  \cite{gh83,k}).

\subsection{First order torus bifurcation} As our first main result, we establish the relation between a {\it Hopf Bifurcation} in the averaged system \eqref{tas} with a {\it Torus Bifurcation}  in the differential equation \eqref{tm1}.

\begin{mtheorem}\label{teo1}In addition to hypotheses {\bf A1} and {\bf A2}, assume that  $\ell_{1,1}\neq0.$ Then, for each $\e>0$ sufficiently small, there exist a $C^1$ curve $\mu(\e)\in J_0,$ with  $\mu(0)=\mu_0,$ and neighborhoods $\U_{\e}\subset \s^1\times\Omega$ of the periodic solution $\Phi(t;\mu(\e),\e)$ and  $J_{\e}\subset J_0$ of $\mu(\e)$ for which the following statements hold.
\begin{itemize}

\item[$(i)$] For $\mu\in J_{\e}$ such that $\ell_{1,1}(\mu-\mu(\e))\geq0,$ the periodic orbit $\Phi(t;\mu(\e),\e)$ is unstable (resp. asymptotically stable), provided that $\ell_{1,1}>0$ (resp. $\ell_{1,1}<0$), and the differential equation \eqref{tm1} does not admit any invariant tori in $\U_{\e}.$

\item[$(ii)$] For $\mu\in J_{\e}$ such that $\ell_{1,1}(\mu-\mu(\e))<0,$ the differential equation \eqref{tm1} admits a unique invariant torus $T_{\mu,\e}$ in $\U_{\e}$ surrounding the periodic orbit $\Phi(t;\mu,\e).$ Moreover,  $T_{\mu,\e}$ is unstable (resp. asymptotically stable), whereas the periodic orbit $\Phi(t;\mu,\e)$ is asymptotically stable (resp. unstable), provided that $\ell_{1,1}>0$ (resp. $\ell_{1,1}<0$). 

\item[$(iii)$] $T_{\mu,\e}$ is the unique invariant torus of the differential equation \eqref{tm1} bifurcating from the periodic orbit $\Phi(t;\mu(\e),\e)$ in $\U_{\e}$ as $\mu$ passes through $\mu(\e).$
\end{itemize}

\end{mtheorem}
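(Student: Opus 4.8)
\emph{Step 1: the Poincar\'e map and its fixed point.} The plan is to recast Theorem~\ref{teo1} as a Neimark-Sacker bifurcation for the Poincar\'e map of \eqref{tm1} along a curve $\mu(\e)$ in the parameter plane, and then to transfer the conclusion of the Neimark-Sacker Theorem of Section~\ref{NSB} to the flow via the standard correspondence between invariant tori of \eqref{tm1} and invariant closed curves of its Poincar\'e map. For $|\e|$ small, solutions of \eqref{tm1} (in the extended phase space) issued near $\bx_{\mu_0}$ are defined on $[0,T]$, so the Poincar\'e map $P(\bx;\mu,\e)=\f(T,\bx;\mu,\e)$ on $\Sigma=\{0\}\times\Omega$ is well defined near $\bx_{\mu_0}$, and by Lemma~\ref{lap} it expands as $P(\bx;\mu,\e)=\bx+\e\,\bg_1(\bx;\mu)+\e^2 R(\bx;\mu,\e)$. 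The fixed-point equation $P(\bx;\mu,\e)-\bx=0$ thus reads $\bg_1(\bx;\mu)+\e\,R(\bx;\mu,\e)=0$, whose left-hand side vanishes at $(\bx_{\mu_0};\mu_0,0)$ with invertible $\bx$-derivative $D_\bx\bg_1(\bx_{\mu_0};\mu_0)$ (its eigenvalues being $\pm i\omega_0$); the Implicit Function Theorem then produces a $C^1$ map $\mathbf{p}(\mu,\e)$, with $\mathbf{p}(\mu,0)=\bx_\mu$, solving it, and by the uniqueness in Lemma~\ref{l1} one has $\mathbf{p}(\mu,\e)=\f(0;\mu,\e)$. Setting $\by=\bx-\mathbf{p}(\mu,\e)$ turns $P$ into a $C^1$ family $Q(\,\cdot\,;\mu,\e)$ of planar maps fixing $0$, and an invariant torus of \eqref{tm1} contained in a tube around $\Phi(t;\mu,\e)$ corresponds exactly to an invariant closed curve of $Q(\,\cdot\,;\mu,\e)$ around $0$.

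\emph{Step 2: eigenvalue crossing, the bifurcation curve, and non-resonance.} Differentiating the fixed-point relation gives $D_\by Q(0;\mu,\e)=I+\e\,D_\bx\bg_1(\mathbf{p}(\mu,\e);\mu)+\e^2(\cdots)$, so its eigenvalues are $\la_\pm(\mu,\e)=1+\e\,(a(\mu,\e)\pm i\,b(\mu,\e))+O(\e^2)$ with $a(\mu,0)=\al(\mu)$ and $b(\mu,0)=\be(\mu)$; for $(\mu,\e)$ near $(\mu_0,0)$ they are simple and non-real, and a short computation gives $|\la_+(\mu,\e)|^2=1+2\e\,\al(\mu)+\e^2\,q(\mu,\e)$. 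Hence $\Psi(\mu,\e):=\al(\mu)+\frac{\e}{2}\,q(\mu,\e)$ satisfies $\Psi(\mu_0,0)=\al(\mu_0)=0$ and $\partial_\mu\Psi(\mu_0,0)=\al'(\mu_0)\neq0$ by {\bf A2}, so the Implicit Function Theorem gives a $C^1$ curve $\mu(\e)$ with $\mu(0)=\mu_0$ and $\Psi(\mu(\e),\e)=0$, i.e. $|\la_+(\mu(\e),\e)|=1$. Writing $\la_\pm(\mu(\e),\e)=e^{\pm i\T(\e)}$, one has $\T(\e)=\e\,\omega_0+O(\e^2)\in(0,\pi/2)$ for $\e>0$ small, which yields the non-resonance $\la_\pm^k\neq1$, $k=1,2,3,4$; moreover, at $\mu=\mu(\e)$ one has $\partial_\mu|\la_+|=\e\,\al'(\mu_0)+O(\e^2)\neq0$ (the transversality needed by the Neimark-Sacker Theorem), and near $\mu(\e)$ the quantity $|\la_+(\mu,\e)|-1$ has the sign of $\al'(\mu_0)\,(\mu-\mu(\e))$ for $\e>0$ small.

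\emph{Step 3: the first Lyapunov coefficient and conclusion.} By Step~1 the quadratic and cubic Taylor coefficients of $Q(\,\cdot\,;\mu,\e)$ at $0$ equal $\e$ times those of $\bg_1(\,\cdot\,;\mu)$ at $\bx_\mu$, up to $O(\e^2)$; substituting them, together with $\la_+=e^{i\T(\e)}$, into the standard (Kuznetsov-type) formula for the Neimark-Sacker first Lyapunov coefficient of a planar map at a fixed point and keeping the leading order in $\e$ gives, for the coefficient $d(\e)$ of $Q(\,\cdot\,;\mu(\e),\e)$, the identity $d(\e)=c\,\e\,\ell_{1,1}+O(\e^2)$ with $c>0$; equivalently, as $\e\to0$ this coefficient degenerates into the first Lyapunov coefficient of $\bg_1(\,\cdot\,;\mu_0)$ at $\bx_{\mu_0}$, which is precisely $\ell_{1,1}$ as given by \eqref{l1TA}. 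In particular $d(\e)\neq0$ and $\mathrm{sign}\,d(\e)=\mathrm{sign}\,\ell_{1,1}$ for $\e>0$ small. For each such $\e$, applying the Neimark-Sacker Theorem of Section~\ref{NSB} to the one-parameter family $\mu\mapsto Q(\,\cdot\,;\mu,\e)$ at $\mu=\mu(\e)$ (its hypotheses having been checked in Steps~2--3) produces neighborhoods $V_\e\ni0$ and $J_\e\subset J_0$ of $\mu(\e)$ such that $Q(\,\cdot\,;\mu,\e)$ has a unique invariant closed curve in $V_\e$ exactly when $d(\e)\,(|\la_+(\mu,\e)|-1)<0$ --- which, after the bookkeeping of signs in Step~2, is the side $\ell_{1,1}\,(\mu-\mu(\e))<0$ of the statement --- the stabilities of that curve and of the fixed point being governed by the signs of $d(\e)$ and $|\la_+|-1$, and no such curve occurring in $V_\e$ otherwise. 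Taking $\U_\e$ to be the tube over $V_\e$ along the periodic orbit and undoing the translation $\by=\bx-\mathbf{p}(\mu,\e)$, the torus/closed-curve correspondence of Step~1 translates these facts into statements $(i)$--$(iii)$: the existence and uniqueness of $T_{\mu,\e}$, its prescribed stability together with that of $\Phi(t;\mu,\e)$, the absence of invariant tori in $\U_\e$ on the complementary side, and the local uniqueness of the bifurcating torus (the latter being the uniqueness of the Neimark-Sacker branch).

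\emph{Main obstacle.} The delicate point is Step~3, namely the identity $d(\e)=c\,\e\,\ell_{1,1}+O(\e^2)$ with $c>0$. Establishing it amounts to carrying the (lengthy) formula for the Neimark-Sacker coefficient of a planar map through the near-identity expansion $P=\mathrm{id}+\e\,\bg_1+O(\e^2)$, and checking that both the cubic terms and the ``quadratic-squared'' terms of that formula contribute at order $\e$ --- the latter because their denominators vanish like $\e$ as $\la_+\to1$ --- that together they assemble exactly into $\e$ times the right-hand side of \eqref{l1TA}, that the $O(\e^2)$ remainder $R$ from Lemma~\ref{lap} is immaterial at the leading order, and finally that the constant $c$ is positive. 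The remaining ingredients --- the well-definedness and expansion of $P$, the continuation of the fixed point, the eigenvalue crossing, the non-resonance, and the torus/closed-curve correspondence --- are routine once arranged as above.
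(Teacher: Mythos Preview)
Your proposal is correct and follows the same route as the paper: pass to the Poincar\'e map, continue the fixed point and locate the critical curve $\mu(\e)$ by the Implicit Function Theorem (this is the content of Lemmas~\ref{l1} and~\ref{l2}), check non-resonance and transversality, expand the Neimark--Sacker coefficient to leading order in $\e$, and then invoke Theorem~\ref{l5}. The paper carries out your Step~3 explicitly rather than leaving it as an obstacle: it uses the Jordan-form clause of {\bf A1} to take ${\bf p}={\bf q}=(1,-i)/\sqrt{2}$, writes $g_{ij}=\e\,g_{ij}^0+O(\e^2)$, exploits exactly the cancellation you anticipate, namely $\dfrac{(1-2e^{i\theta_\e})e^{-2i\theta_\e}}{2(1-e^{i\theta_\e})}g_{20}g_{11}=-\dfrac{i\,\e}{2\omega_0}g_{20}^0g_{11}^0+O(\e^2)$, and arrives at $\ell_1=\e\,\ell_{1,1}+O(\e^2)$ (so your constant is $c=1$). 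One small point of bookkeeping: in Step~3 you pass from $d(\e)\,(|\la_+|-1)<0$ to $\ell_{1,1}(\mu-\mu(\e))<0$, silently dropping the factor $\al'(\mu_0)$ recorded in Step~2; the paper avoids this by applying Theorem~\ref{l5} directly in the parameter $\sigma=\mu-\mu(\e)$, whose hypothesis is phrased as $\ell_1\sigma<0$.
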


\begin{remark}
The $\C^1$ differentiability of the functions $\bF_1$ and $\widetilde\bF$ was the very first assumption on the differential equation \eqref{tm1}. It is worth mentioning that this hypothesis is not strictly necessary in order to apply Theorem \ref{teo1}. In fact, we shall see that it is sufficient to have the differentiability of the ``Poincar\'{e} Map''  of the differential equation \eqref{tm1}. This implies that Theorem \ref{tm1} can be applied to a wider class of differential equations, in particular for the class of piecewise smooth differential equation introduced in \cite{llinovrod}.
\end{remark}

\subsection{Structure of the paper} In Section \ref{NSB}, we discuss the Neimark-Sacker bifurcation, which plays a key whole in the proof of  Theorem \ref{teo1}. Section \ref{PTA} is devoted to the proof of  Theorem \ref{teo1}. Afterward, in Section \ref{HOA}, we state Theorem \ref{teo2}, which generalizes Theorem \ref{teo1} by establishing weaker conditions on the higher order averaged functions $\bg_i$
still ensuring a torus bifurcation. In Section \ref{HPl}, we relate Hopf bifurcation in the higher order averaged system to a torus bifurcation in the corresponding differential equation.
 Finally, in Section \ref{Ex}, the obtained results are applied to study a torus bifurcation in a family of 3D vector fields. An Appendix is provided with the formulae of the averaged functions.

\section{Neimark-Sacker Bifurcation}\label{NSB}
The proof of our main result is mainly based on the classical Neimark-Sacker Bifurcation, which is a version of Hopf Bifurcation for maps. In what follows we shall briefly discuss this bifurcation.

Consider the following one parameter family of maps
\begin{equation}\label{mf1}
\bx \mapsto \bF(\bx;\sigma), \quad \bx=(x_1,x_2)^\intercal\in \R^2, \quad \sigma \in \R^1.
\end{equation}
Assume that $\bx=0$ is a fixed point of the map \eqref{mf1},  for every $|\sigma|$ sufficiently small.
Denote by $r(\sigma)e^{\pm i \varphi(\sigma)}$ the pair of complex conjugated eigenvalues of the Jacobian matrix $D_{\bx}\bF(0,\sigma).$  We shall assume that $r(0)=1$ and $\varphi(0)=\theta,$ with $0<\theta<\pi.$  Also, consider the Taylor expansion of $\bF(\bx;0)$ around $\bx=0$ as 
\begin{equation*}\label{mf2}
\bF(\bx;0)=A \bx+\dfrac{1}{2}B(\bx,\bx)+\dfrac{1}{6}C(\bx,\bx,\bx)+\CO(||\bx||^4),
\end{equation*}
where $B(\bx,\by)=\big(B^1(\bx,\by),B^2(\bx,\by)\big)$ and $C(\bx,\by,\bz)=\big(C^1(\bx,\by,\bz),$ $C^2(\bx,\by,\bz)\big)$ are multilinear functions with the following components
\begin{equation}\label{comp}
\begin{array}{l}
\displaystyle B^i(\bx,\by)=\sum_{j,k=1}^2\dfrac{\partial^2\bF_i}{\partial x_j\partial x_k}(0;0)\, x_jy_k,\vspace{0.2cm}\\

\displaystyle C^i(\bx,\by,\bz)=\sum_{j,k,l=1}^2\dfrac{\partial^3\bF_i}{\partial x_j\partial x_k\partial x_l}(0;0)\, x_jy_kz_l,
\end{array}
\end{equation}
for $i=1,2,$ and  $A=D_{\bx} \bF(0,0).$  

We use the elements above to construct the Lyapunov coefficient $\ell_1$ of the map \eqref{mf1} at $(\bx;\sigma)=(0;0).$ Accordingly, let ${\bf p},{\bf q} \in \mathbb{C}^2$ be, respectively,  complex eigenvectors of $A^\intercal$ and $A$  satisfying $A^\intercal{\bf p}=e^{-i \theta}{\bf p},$ $A{\bf q}=e^{ i \theta}{\bf q},$ and $\langle {\bf p},{\bf q}\rangle=1.$ Here, for $u,v\in \mathbb{C}^2,$ we are considering the inner product $\langle u,v\rangle=\ov{u}^\intercal \cdot v.$ Thus, we define
\begin{equation}\label{ell1}
\ell_1:=\textrm{Re}\left(\dfrac{e^{-i\theta}g_{21}}{2}\right)-\textrm{Re}\left(\dfrac{(1-2e^{i\theta})e^{-2i\theta}}{2(1-e^{i\theta})}g_{20}g_{11}\right)-\dfrac{1}{2}|g_{11}|^2-\dfrac{1}{4}|g_{02}|^2\neq0,
\end{equation}
where 
\begin{equation}\label{gij}
\begin{array}{l}
g_{21}=\langle {\bf p},C({\bf q},{\bf q},\ov {\bf q})\rangle,\,\, g_{20}=\langle {\bf p},B({\bf q},{\bf q})\rangle,\vspace{0,2cm}\\
g_{11}=\langle {\bf p},B({\bf q},\ov {\bf q})\rangle, \text{ and }  \,\, g_{02}=\langle {\bf p},B(\ov {\bf q},\ov {\bf q})\rangle.
\end{array}
\end{equation}

Under generic conditions, a Neimark-Sacker bifurcation is characterized by the existence of a neighborhood of the fixed point $\bx=0$ in which a unique invariant closed curve bifurcates from $\bx=0$ (see {\cite[Theorem $4.6$]{k}}). The next theorem provides generic conditions ensuring a Neimark-Sacker bifurcation in \eqref{mf1}.

\begin{theorem}\label{l5}
Suppose that for $|\sigma|$ sufficiently small  $\bx=0$ is a fixed point of the map \eqref{mf1} with complex eigenvalues $r(\sigma)e^{\pm i \f(\sigma)}$ satisfying $r(0)=1$ and $\f(0)=\T,$  $0<\theta<\pi.$ In addition, assume that
\begin{itemize}
\item[$C.1$] $r'(0)\neq 0,$
\item[$C.2$]$e^{ik\theta}\neq 1,$ for $k=1,2,3,4,$ and
\item[$C.3$] $\ell_1\neq0.$
\end{itemize}
Then, there exists neighborhoods $U\subset\R^2$ of $\bx=0$ and $I\subset \R$ of $\sigma=0$ for which the following statements hold.
\begin{itemize}
\item[$(i)$] For  $\sigma\in I$ such that $\ell_1\sigma\geq 0,$ the fixed point $\bx=0$ is unstable (resp. asymptotically stable), provided that $\ell_1>0$ (resp. $\ell_1<0$), and the map \eqref{mf1} does not admit any invariant closed curve in $U.$  
\item[$(ii)$] For  $\sigma\in I$ such that $\ell_1\sigma<0,$  the map \eqref{mf1} admits a unique invariant closed curve $S_{\mu}$ in $U$ surrounding the fixed point $\bx=0.$  Moreover, $S_{\mu}$ is unstable (resp. asymptotically stable), whereas the fixed point $\bx=0$ is asymptotically stable (resp. unstable), provided that $\ell_1>0$ (resp. $\ell_1<0).$
\item[$(iii)$] $S_{\mu}$ is the unique invariant closed curve of the map \eqref{mf1} bifurcating from the fixed point $\bx=0$ in $U$ as $\sigma$ pass through $0.$
\end{itemize}
\end{theorem}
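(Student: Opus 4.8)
The plan is to follow the classical route to the Neimark--Sacker theorem (following the treatment in \cite[Sections 4.6--4.7]{k}): reduce the planar map to a scalar amplitude equation by a normal-form transformation, analyze the bifurcation of the fixed points of that equation, and then transfer the conclusions back to the full two-dimensional map.

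First I would introduce the complex coordinate $z$ via $\bx = z\,{\bf q} + \bar z\,\ov{\bf q}$. Using $A{\bf q} = e^{i\T}{\bf q}$ and $\langle{\bf p},{\bf q}\rangle = 1$, the map \eqref{mf1} becomes, for each $\sigma$ near $0$, a single complex map depending smoothly on $\sigma$,
$$
z \;\longmapsto\; \mu(\sigma)\,z + \sum_{2\le j+k\le 3}\frac{g_{jk}(\sigma)}{j!\,k!}\,z^j\bar z^k + \CO(|z|^4),
$$
where $\mu(\sigma) = r(\sigma)e^{i\f(\sigma)}$, so $\mu(0) = e^{i\T}$, and $g_{jk}(0)$ are the quantities in \eqref{gij}. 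One then performs the standard sequence of invertible, $\sigma$-dependent, near-identity polynomial substitutions that remove the non-resonant monomials. Condition $C.2$ (for $k=1,2,3,4$) is precisely what makes the small divisors $\mu(\sigma)-\mu(\sigma)^{j}\bar\mu(\sigma)^{k}$ occurring in the homological equations for the quadratic terms and for the non-resonant cubic terms nonzero at $\sigma=0$; the only monomial whose divisor vanishes at $\sigma=0$ is the cubic $z|z|^2$, which is therefore kept. Hence the map is brought to
$$
z \;\longmapsto\; \mu(\sigma)\,z + c_1(\sigma)\,z|z|^2 + \CO(|z|^4).
$$
A bookkeeping computation --- tracking the contribution of $g_{20},g_{11},g_{02}$ through the quadratic substitution together with the direct contribution of $g_{21}$ --- shows that $\textrm{Re}\big(e^{-i\T}c_1(0)\big)$ is exactly the expression \eqref{ell1}, so that $C.3$ reads $\textrm{Re}\big(e^{-i\T}c_1(0)\big) = \ell_1 \neq 0$.

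Next I would write $z = \rho\,e^{i\psi}$ and $r(\sigma) = 1 + \al(\sigma)$; reparametrizing $\sigma$ (legitimate by $C.1$) we may assume $\al(\sigma) = \sigma$. The normal form then reads
$$
\rho \;\mapsto\; \rho\big(1 + \al(\sigma) + a(\sigma)\,\rho^2\big) + \rho^4 R(\rho,\psi,\sigma),
\qquad
\psi \;\mapsto\; \psi + \f(\sigma) + \rho^2 Q(\rho,\psi,\sigma),
$$
with $a(0) = \ell_1$. Discarding the $\CO(\rho^4)$ remainder, the radial equation decouples: $\rho = 0$ is always a fixed point, and a second fixed point $\rho_*(\sigma) = \sqrt{-\al(\sigma)/a(\sigma)}$ exists precisely when $\al(\sigma)/a(\sigma) < 0$, that is, when $\ell_1\sigma < 0$. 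The radial multipliers at $\rho = 0$ and $\rho = \rho_*$ are $1 + \al(\sigma)$ and $1 - 2\al(\sigma) + \CO(\al(\sigma)^2)$, so for $0 < |\sigma|$ small the two fixed points are hyperbolic with opposite stability, the sign being governed by $\al(\sigma)$, hence by $\ell_1$. Therefore $\{\rho = \rho_*(\sigma)\}$ is a normally hyperbolic invariant closed curve of the truncated normal form, with normal stability that of $\rho_*$.

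The only genuinely delicate step is to promote this picture to the full map --- retaining the $\CO(\rho^4)$ remainder and the angular dependence --- so as to obtain persistence and uniqueness of the invariant closed curve in a fixed neighborhood $U$ of $\bx = 0$, the stated stability types, non-existence of invariant closed curves on the side $\ell_1\sigma\ge0$, and statement $(iii)$. I would handle this as in \cite[Theorem 4.6]{k} (see also \cite{gh83,wiggins}): rescale $\rho = \sqrt{|\al(\sigma)|}\,\xi$, so that on the annulus $\tfrac12\le\xi\le2$ the remainder terms become $\CO\big(\sqrt{|\al(\sigma)|}\big)$-small perturbations of the truncated map; the persistence theorem for normally hyperbolic invariant manifolds --- equivalently, a graph-transform contraction on the space of $C^1$ graphs $\xi = \xi(\psi)$ over the circle --- then yields, for $0 < |\sigma|$ small with $\ell_1\sigma<0$, a unique invariant closed curve $S_\mu\subset U$ near $\{\rho = \rho_*\}$ with normal stability opposite to that of $\bx = 0$ and of sign dictated by $\ell_1$. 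For $\ell_1\sigma\ge0$ the same rescaling exhibits the map in $U$ as a small perturbation of a radial contraction (resp.\ expansion), which rules out invariant closed curves there. The stability of $\bx = 0$ in every case follows from $r(\sigma) = 1 + \al'(0)\sigma + \cdots$ and the reduction to the radial equation. Finally, $(iii)$ is immediate: on the side $\ell_1\sigma<0$, any invariant closed curve bifurcating from $\bx = 0$ lies in $U$ and hence coincides with $S_\mu$ by the uniqueness in $(ii)$, while on the side $\ell_1\sigma\ge0$, part $(i)$ excludes any such curve. The main obstacle is exactly this last persistence and uniqueness argument; the passage to complex coordinates and the normal-form bookkeeping are routine.
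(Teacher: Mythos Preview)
The paper does not prove this theorem at all: it is stated as a known result, with the parenthetical reference ``see \cite[Theorem~4.6]{k}'' immediately preceding its statement, and is used as a black box in the proofs of Theorems~\ref{teo1} and~\ref{teo2}. So there is no ``paper's own proof'' to compare against.

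Your sketch is the standard route to the Neimark--Sacker theorem as presented in Kuznetsov \cite{k}, and the outline is correct: complexify via the eigenbasis, kill the non-resonant quadratic and cubic terms using $C.2$, identify the surviving coefficient with \eqref{ell1}, pass to polar coordinates, analyze the truncated radial map, and then invoke normal hyperbolicity (or a graph-transform argument) after the $\rho=\sqrt{|\alpha|}\,\xi$ rescaling to handle the remainder. You correctly flag the persistence/uniqueness step as the only substantive part. One small caveat: your treatment of the borderline case $\sigma=0$ in item $(i)$ (stability of $\bx=0$ when $r(0)=1$) cannot be read off from $r(\sigma)=1+\alpha'(0)\sigma+\cdots$ alone; at $\sigma=0$ the linear part is neutral and the stability is decided by the sign of $\ell_1$ through the cubic term in the normal form, which your radial equation does give but which you should state explicitly.
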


\section{Proofs of Lemma \ref{l1} and Theorem \ref{teo1}}\label{PTA}
 The {\it Poincar\'{e} Map} of the differential equation \eqref{tm1}, defined on the section $\Sigma=\{0\}\times \Omega,$ writes 
\begin{equation}\label{frm}
\bx\mapsto P(\bx;\mu,\e)=\bx+\e \bg_1(\bx;\mu)+\e^2\widetilde G(\bx;\mu,\e).
\end{equation}
In what follows, we shall prove Lemma \ref{l1} by showing the existence of fixed points $\xi(\mu,\e)$ for the Poincar\'{e} Map.

\begin{proof}[Proof of Lemma \ref{l1}]
Define 
\[
f(\bx,\mu,\e):=\dfrac{P(\bx;\mu,\e)-\bx}{\e}=\bg_1(\bx;\mu)+\e\widetilde G(\bx;\mu,\e).
\]
Notice that $f(\bx_{\mu_0},\mu_0,0)=(0,0)$ and
\[
\dfrac{\p f}{\p \bx}(\bx_\mu,\mu,0)= \p_{\bx} \bg_1(\bx_\mu;\mu).
\] 
From the hypothesis {\bf A1}, $\al(\mu_0)=0$ and $\be(\mu_0)=\omega_0\neq0,$ where $\al(\mu)\pm i\beta(\mu)$ are the complex conjugated eigenvalues of  $\p_{\bx} \bg_1(\bx_\mu;\mu).$ Therefore, there exists a neighborhood $J_0\subset J$ of $\mu_0$ such that $\beta(\mu)\neq0$ for every $\mu\in \ov{J_0}.$ Consequently,
\[
\left|\dfrac{\p f}{\p \bx}(\bx_\mu,\mu,0)\right|\neq 0,
\]
for every $\mu\in \ov{J_0}.$ Hence, from the Implicit Function Theorem and from the compactness of $\ov{J_0},$ there exists $\e_1,$ $0<\e_1<\e_0,$ and a unique function $\bxi(\mu,\e),$ defined on $\ov{J_0}\times(-\e_1,\e_1),$ such that $\bxi(\mu,0)=\bx_\mu$ and $f(\bxi(\mu,\e),\e)=0$ for every $\e\in(-\e_1,\e_1)$ and $\mu\in\ov{J_0}.$ \end{proof}

The next result provides a curve $\mu(\e)$ of critical values for the parameter $\mu$ regarding the fixed point $\bxi(\mu,\e)$ of the map \eqref{frm} for which the conditions of Theorem \ref{l5} hold.

\begin{lemma}\label{l2}
For each $(\mu,\e)\in J_0\times (-\e_1,\e_1),$ let $\lambda(\mu,\e)$ and  $\ov{\lambda(\mu,\e)}$ be the pair of complex conjugated eigenvalues of $D_{\bx} P (\bxi(\mu,\e);\mu,\e)$ and assume that hypotheses {\bf A1} and {\bf A2} hold. Then, there exists $\e_2,$ $0<\e_2<\e_1,$ and a unique smooth function $\mu:(-\e_2,\e_2)\rightarrow J_0,$ with  $\mu(0)=\mu_0,$ satisfying
\begin{itemize}
\item[{\bf 1.}] $|\lambda(\mu(\e),\e)|=1,$ 

\smallskip

\item[{\bf 2.}] $\big(\lambda(\mu({\e}),\e)\big)^k\neq 1,$ for $k\in\{1,2,3,4\},$ and
\item[{\bf 3.}] $\dfrac{d}{d\mu} |\lambda(\mu,\e)|\Big|_{\mu=\mu({\e})}\neq 0,$
\end{itemize}
for every $\e\in(-\e_2,\e_2)\setminus\{0\}.$
\end{lemma}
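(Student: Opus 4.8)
The plan is to analyze the eigenvalues $\lambda(\mu,\e)$ of the Jacobian $D_\bx P(\bxi(\mu,\e);\mu,\e)$ by relating them, for small $\e$, to the eigenvalues $\al(\mu)\pm i\beta(\mu)$ of $D_\bx\bg_1(\bx_\mu;\mu)$. From the expansion \eqref{frm}, we have
\[
D_\bx P(\bxi(\mu,\e);\mu,\e)=\mathrm{Id}+\e\, D_\bx\bg_1(\bxi(\mu,\e);\mu)+\e^2 D_\bx\widetilde G(\bxi(\mu,\e);\mu,\e),
\]
so the eigenvalues have the form $\lambda(\mu,\e)=1+\e\,\nu(\mu,\e)$, where $\nu(\mu,\e)$ is an eigenvalue of $D_\bx\bg_1(\bxi(\mu,\e);\mu)+\e\, D_\bx\widetilde G(\bxi(\mu,\e);\mu,\e)$; in particular $\nu(\mu,0)=\al(\mu)+i\beta(\mu)$. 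First I would set up, via the characteristic polynomial of this $2\times2$ matrix, smooth functions $a(\mu,\e)$ and $b(\mu,\e)$ (real and imaginary parts of $\nu$) with $a(\mu,0)=\al(\mu)$, $b(\mu,0)=\beta(\mu)$; here I use that for $\mu\in\ov{J_0}$ one has $\beta(\mu)\neq0$, so the complex-conjugate pair stays genuinely complex and the square root in the eigenvalue formula is smooth. Then $|\lambda(\mu,\e)|^2=(1+\e a(\mu,\e))^2+\e^2 b(\mu,\e)^2$.

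Next I would establish condition \textbf{1}. Define $H(\mu,\e):=\dfrac{|\lambda(\mu,\e)|^2-1}{\e}=2a(\mu,\e)+\e\big(a(\mu,\e)^2+b(\mu,\e)^2\big)$ for $\e\neq0$, extended smoothly to $\e=0$ by $H(\mu,0)=2\al(\mu)$. Then $H(\mu_0,0)=2\al(\mu_0)=0$ and $\p_\mu H(\mu_0,0)=2\al'(\mu_0)\neq0$ by hypothesis \textbf{A2}. The Implicit Function Theorem yields $\e_2\in(0,\e_1)$ and a unique smooth $\mu:(-\e_2,\e_2)\to J_0$ with $\mu(0)=\mu_0$ and $H(\mu(\e),\e)=0$, i.e.\ $|\lambda(\mu(\e),\e)|=1$ for all $\e$ (including $\e=0$, trivially, since $\lambda(\mu_0,0)=1$). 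For condition \textbf{3}, I would compute $\dfrac{d}{d\mu}|\lambda(\mu,\e)|^2=\e\,\p_\mu H(\mu,\e)$; evaluated at $\mu=\mu(\e)$ this equals $\e\,\p_\mu H(\mu(\e),\e)$, and since $\p_\mu H(\mu(\e),\e)\to 2\al'(\mu_0)\neq0$ as $\e\to0$, shrinking $\e_2$ if necessary makes $\p_\mu H(\mu(\e),\e)\neq0$; hence for $\e\in(-\e_2,\e_2)\setminus\{0\}$ the derivative $\dfrac{d}{d\mu}|\lambda(\mu,\e)|$ is nonzero (note $|\lambda(\mu(\e),\e)|=1\neq0$, so dividing by $2|\lambda|$ is fine).

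It remains to verify condition \textbf{2}: $\lambda(\mu(\e),\e)^k\neq1$ for $k=1,2,3,4$ and all small $\e\neq0$. Write $\lambda(\mu(\e),\e)=e^{i\T(\e)}$ (modulus one by condition \textbf{1}) with $\T(\e)\to\T(0)$; since $\lambda(\mu_0,0)=1+0\cdot\nu(\mu_0,0)$ has $\nu(\mu_0,0)=i\omega_0$, we get $\lambda(\mu(\e),\e)\to 1$ as $\e\to0$, so $\T(\e)\to 0$. This looks like the main obstacle: a naive limit gives $\T(0)=0$, which would violate $C.2$ of Theorem \ref{l5}. The resolution is that $\lambda(\mu(\e),\e)\neq1$ for $\e\neq0$, because $\lambda=1$ would force $\nu(\mu(\e),\e)=0$, contradicting $b(\mu(\e),\e)=\beta(\mu(\e))+\CO(\e)$ near $\omega_0\neq0$; more precisely $\T(\e)$ behaves like $\e\,\omega_0+\CO(\e^2)$, so $\T(\e)\in(0,\pi)$ (up to sign/conjugation) for small $\e\neq0$ and $k\T(\e)\notin 2\pi\Z$ for $k\in\{1,2,3,4\}$ once $|k\T(\e)|<2\pi$, which holds for $\e$ small. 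Thus shrinking $\e_2$ once more secures condition \textbf{2}. I would also remark that here the section $\Sigma=\s^1\times\Omega$ is identified with $\Omega$, and that the hypothesis $0<\theta<\pi$ appearing in Theorem \ref{l5} is met in the limiting sense after passing to the conjugate eigenvalue if needed; the key point for applying Theorem \ref{l5} with $\sigma=\mu-\mu(\e)$ is precisely conditions \textbf{1}--\textbf{3} together with $\ell_1\neq0$, the latter being handled separately in the proof of Theorem \ref{teo1}.
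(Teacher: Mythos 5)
Your proposal is correct and follows essentially the same route as the paper: expand $D_\bx P$ in $\e$, write $|\lambda(\mu,\e)|^2=1+\e\,\ell(\mu,\e)$ with $\ell(\mu,0)=2\al(\mu)$, apply the Implicit Function Theorem using \textbf{A2} to get $\mu(\e)$, and then read off conditions \textbf{2} and \textbf{3} from $\lambda(\mu(\e),\e)=1+i\e\omega_0+\CO(\e^2)$. Your treatment is in fact slightly more careful than the paper's on two points the authors gloss over — the smoothness of the eigenvalue branches (guaranteed because $\beta\neq0$ on $\ov{J_0}$ keeps the discriminant negative) and the explicit argument that $k\T(\e)\notin2\pi\Z$ for $k\le4$ — but these are refinements, not a different method.
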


\begin{proof}

For each $(\mu,\e)\in J_0\times (-\e_1,\e_1),$ the Jacobian matrix of the first return map $P(\bx;\mu,\e)$ at its fixed point $\bxi(\mu,\e)$ is given by
$$
D_\bx P (\bxi(\mu,\e);\mu,\e)=Id+\e \dfrac{\p \bg_1}{\p\bx}(\bx_\mu;\mu)+\CO(\e^2),
$$
which has the following eigenvalues
\begin{equation*}\label{ev}
\begin{array}{l}
\lambda(\mu,\e)=1+ \e\left(\al(\mu)+ i\beta(\mu) \right)+\CO(\e^2), \text{ and}\vspace{0.2cm}\\
 \ov{\lambda(\mu,\e)}=1+ \e\left(\al(\mu)- i\beta(\mu) \right)+\CO(\e^2).
 \end{array}
\end{equation*}
Notice that
\begin{equation}\label{lambda2}
\begin{array}{rl}
|\lambda(\mu,\e)|^2&=1+2\e   \al(\mu)+\CO(\e^2)\vspace{0.1cm}\\
&=1+\e\, \ell(\mu,\e),
 \end{array}
 \end{equation}
where $\ell(\mu,\e)=2\al(\mu)+\CO(\e).$ From hypothesis {\bf A2}, we have 
$$
\ell(\mu_0,0)=0 \quad \mbox{and} \quad \dfrac{\p \ell}{\p \mu}(\mu_0,0)=2\al'(\mu_0)\neq 0.
$$
Thus, by the Implicit Function Theorem,  there exist $\e_2,$ $0<\e_2<\e_1,$ and a unique function $\mu:(-\e_2,\e_2)\rightarrow J_0=(\mu_0-\de_1,\mu_0+\de_1)$ such that $\mu(0)=\mu_0$ and $\ell(\mu(\e),\e)=0,$ for every $\e\in(-\e_2,\e_2).$  This implies that $|\lambda(\mu(\e),\e)|=1,$ for every $\e\in(-\e_2,\e_2).$ Hence, statement {\bf 1} is proved. Moreover,
since
\begin{equation}\label{lam_e}
\begin{array}{rl}
\lambda(\mu(\e),\e)=&1+\e\big(\al(\mu(\e))+ i\beta(\mu(\e))\big)+\CO(\e^2)\\
=&1+ \e\left(i   \omega_0\right)+\CO(\e^2),
\end{array}
\end{equation}
and $\omega_0>0,$ the parameter $\e_2>0$ can be made smaller, if necessary, in order that
$$
 \lambda(\mu(\e),\e)\not\in \left\lbrace \pm 1, \pm i,-\dfrac{1}{2}\pm i \dfrac{\sqrt{3}}{2}\right\rbrace,
 $$ 
 for every $\e\in(-\e_2,\e_2)\setminus\{0\}.$  Consequently, for $k\in\{1,2,3,4\},$ $\left(\lambda(\mu(\e),\e)\right)^k\neq 1$ for every $\e\in(-\e_2,\e_2)\setminus\{0\},$ which proves statement ${\bf 2}.$
Finally, computing the derivative of \eqref{lambda2} at $\mu=\mu_0$ we implicitly obtain that
\[
 \dfrac{\p}{\p \mu} | \lambda(\mu,\e)|\Big|_{\mu=\mu(\e)}=\alpha'(\mu({\e}))\e+\CO(\e^2)=\alpha'(\mu_0)\e+\CO(\e^2).
\]
Since $\al'(\mu_0)\neq0,$ the parameter $\e_2>0$ can be made smaller again, if necessary, in order that
\[
 \dfrac{\p}{\p \mu} | \lambda(\mu,\e)|\Big|_{\mu=\mu(\e)}\neq0,
\] 
for every $\e\in(-\e_2,\e_2)\setminus\{0\}.$ This concludes the proof of statement ${\bf 3}.$
\end{proof}

Now, we are ready to prove Theorem \ref{teo1}.

\begin{proof}[Proof of Theorem \ref{teo1}] For each $(\mu,\e)\in J_0\times (-\e_2,\e_2),$
let $\bxi(\mu,\e)$ be the fixed point of the Poincar\'e map \eqref{frm} given by Lemma \ref{l1} and let $\mu(\e)$ be the curve of critical values of the parameter $\mu$ given by Lemma \ref{l2}. 

Changing the coordinates  in \eqref{frm} by setting $\bx=\by + \bxi(\mu,\e)$ and taking  $\mu=\sigma+\mu(\e),$ we get the map
\begin{equation}\label{0frm2}
\by\to {\bf H}_\e(\by;\sigma):=P(\by+\bxi(\sigma+\mu(\e),\e);\sigma+\mu(\e),\e)-\bxi(\sigma+\mu(\e),\e).
\end{equation}
Notice that
\[
{\bf H}_\e(\by;\sigma)=\by+\e \bg_1(\by+\bxi(\sigma+\mu(\e),\e),\sigma+\mu(\e))+\e^2\widetilde G(\by+\bxi(\sigma+\mu(\e),\e),\e;\sigma+\mu(\e)).
\]
The proof of Theorem \ref{teo1} will follow by showing that, for each $\e>0$ sufficiently small, the map \eqref{0frm2} satisfies the hypotheses of Theorem \ref{l5}.

First of all, fix $\e_3$ (to be chosen later on) satisfying $0<\e_3<\e_2.$ Denote by $I'_{\e}$ the set of $\sigma\in\R$ such that $\sigma+\mu(\e)\in J_0.$ Since, from Lemma \ref{l2}, $\mu(\e)\in J_0,$ we get that $0\in I'_{\e}.$  Thus, for each $\e\in(0,\e_3),$ Lemma \ref{l1} implies that $\by=0$ is a fixed point of \eqref{0frm2} for every $\sigma\in I'_{\e}.$ Notice that $\eta_{\e}(\sigma)=\lambda(\sigma+\mu(\e),\e)$ and $\ov {\eta_{\e}(\sigma)}=\ov{\lambda(\sigma+\mu(\e),\e)}$ are the eigenvalues of $D_{\by} {\bf H}_\e(0;\sigma).$ Denote $\eta_{\e}(\sigma)=r_{\e}(\sigma)e^{ i \varphi_{\e}(\sigma)}$ and $\varphi_{\e}(0)=\T_{\e},$ $0<\T_{\e}<\pi.$ Thus, from Lemma \ref{l2}, we get
\[
r_{\e}(0)=1,\,\, e^{ik\T_{\e}}\neq 1,\,\, \text{for}\,\, k\in\{1,2,3,4\},\,\, and\,\, r'_{\e}(0)\neq 0,
\]
for every $\e\in(0,\e_3).$ Therefore, the map \eqref{0frm2} satisfies all the conditions of Theorem \ref{l5} but $C.3$ for each $\e\in(0,\e_3).$ 

In order to check condition $C.3,$ we need to compute $\ell_1$ as defined in \eqref{ell1}. Following the procedure of Section \ref{NSB}, we first compute the Taylor expansion of ${\bf H}_\e(\by;0)$ around $\by=0$ as
$$
{\bf H}_\e(\by;0)= A_{\e} \by+\dfrac{1}{2}B_{\e}(\by,\by)+\dfrac{1}{6}C_{\e}(\by,\by,\by)+\CO(||\by||^4),
$$
where $B_{\e}({\bf u},{\bf v})=\left(B^1_{\e}({\bf u},{\bf v}),B^2_{\e}({\bf u},{\bf v})\right)$ and $C_{\e}({\bf u},{\bf v},{\bf w})=\big(C^1_{\e}({\bf u},{\bf v},{\bf w}),$ $C^2_{\e}({\bf u},{\bf v},{\bf w})\big)$ are multilinear functions with the following components
\begin{equation}\label{comp}
\begin{array}{rl}
 B^i_{\e}({\bf u},{\bf v})=&\!\!\displaystyle\e \sum_{j,k=1}^2\dfrac{\partial^2\bg_1^i}{\partial x_j\partial x_k}(\bxi(\mu(\e),\e);\mu(\e))\,u_jv_k+\CO(\e^2)\vspace{0.2cm}\\
=&\!\!\displaystyle\e \sum_{j,k=1}^2\dfrac{\partial^2\bg_1^i}{\partial x_j\partial x_k}(\bx_{\mu_0};\mu_0)\,u_jv_k+\CO(\e^2),\vspace{0.2cm}\\

 C^i_{\e}({\bf u},{\bf v},{\bf w})=&\!\!\displaystyle \e \sum_{j,k,l=1}^2\dfrac{\partial^3 \bg_1^i}{\partial x_j\partial x_k\partial x_l}(\bxi(\mu(\e),\e);\mu(\e))\,u_jv_kw_l+\CO(\e^2)\vspace{0.2cm}\\
 
 =&\!\!\displaystyle \e \sum_{j,k,l=1}^2\dfrac{\partial^3 \bg_1^i}{\partial x_j\partial x_k\partial x_l}(\bx_{\mu_0};\mu_0)\,u_jv_kw_l+\CO(\e^2),
\end{array}
\end{equation}
for $i=1,2,$ and  
\begin{equation}\label{Ae}
\begin{array}{rl}
A_{\e}=&\!\!D_{\bx} H_{\e}(0;0)=\mathrm{Id}+\e\,D_\bx \bg_1(\bxi(\mu(\e),\e);\mu(\e))+\CO(\e^2)\vspace{0.2cm}\\
=&\!\!\mathrm{Id}+\e\,D_\bx \bg_1(\bx_{\mu_0};\mu_0)+\CO(\e^2).
\end{array}
\end{equation}
Now, let ${\bf p}_{\e}\in \mathbb{C}^2$ and ${\bf q}_{\e}\in \mathbb{C}^2$  be, respectively,  the eigenvectors of the matrices $A_{\e}^\intercal$ and $A_{\e}$ satisfying $A_{\e}^\intercal{\bf p}_{\e}=e^{-i \theta_{\e}}{\bf p}_{\e},$ $A_{\e}{\bf q}_{\e}=e^{ i \theta_{\e}}{\bf q}_{\e},$ and $\langle {\bf p}_{\e},{\bf q}_{\e}\rangle=1.$ We claim that ${\bf p}_{\e}={\bf p} +\CO(\e)$ and ${\bf q}_{\e}={\bf q} +\CO(\e),$ where ${\bf p},{\bf q}\in \mathbb{C}^2$ satisfy $D_\bx \bg_1(\bx_{\mu_0};\mu_0)^\intercal {\bf p}=-i\omega_0 {\bf p},$  $D_\bx \bg_1(\bx_{\mu_0};\mu_0) {\bf q}=i\omega_0 {\bf q},$ and $\langle {\bf p},{\bf q}\rangle=1.$ Indeed, 
an  eigenvector $\by\in \mathbb{C}^2$ of $A_{\e}$ with respect to $\eta_{\e}(0)$ satisfies 
\begin{equation}\label{fv}
\left[A_{\e}-\eta_{\e}(0)\,\mathrm{Id}\right]\by=0.
\end{equation}
From  \eqref{lam_e}, \eqref{Ae}, and taking $\by=\by_0+\CO(\e),$  equation \eqref{fv} writes
$$
\left[D_{\bx} \bg_1(\bx_{\mu_0};\mu_0)-i\omega_0 \,\mathrm{Id}\right]\by_0 +\CO(\e)=0.
$$
Matching the coefficients of $\e,$ we get that $\by_0$ is an eigenvector of $D_{\bx} \bg_1(\bx_{\mu_0};\mu_0)$ with respect to the eigenvalue $i\omega_0.$ We can do the same for the matrix $A_{\e}^\intercal.$ Finally, since $\langle {\bf p}_{\e},{\bf q}_{\e}\rangle=1$ for every $\e,$ we conclude that $\langle {\bf p},{\bf q}\rangle=1.$ Furthermore, from hypothesis {\bf A1}, $D_\bx\bg_1(\bx_{\mu_0};\mu_0)$ is in its real normal Jordan form, thus we can take ${\bf p}={\bf q}=(1,-i)/\sqrt{2}.$

Defining
\begin{equation}\label{mult0}
B_{0}({\bf u},{\bf v})=\dfrac{d}{d\e}B_{\e}({\bf u},{\bf v})\Big|_{\e=0} \quad \text{and}\quad {C}_{0}({\bf q},{\bf q},\ov{\bf q})=\dfrac{d}{d\e}{C}_{\e}({\bf q},{\bf q},\ov{\bf q})\Big|_{\e=0},
\end{equation}
and denoting
$g_{20}^0=\langle {\bf p}, B_{0}({\bf q},{\bf q})\rangle,$ $g_{11}^0=\langle {\bf p}, B_{0}({\bf q},\ov{\bf q}) \rangle,$  $g_{02}^0= \langle {\bf p}, B_{0}(\ov{\bf q},\ov{\bf q}) \rangle,$ and $g_{21}^0=\langle {\bf p},    C_{0}({\bf q},{\bf q},\ov {\bf q })\rangle,$ we get, from \eqref{gij} and \eqref{comp}, that
\[
g_{20}=\langle {\bf p}_{\e}, B_{\e}({\bf q}_{\e},{\bf q}_{\e}) \rangle
=\langle {\bf p}, B_{\e}({\bf q},{\bf q}) \rangle+\CO(\e^2)
=\e\,g_{20}^0+\CO(\e^2).
\]
Analogously,  $g_{11}=\e\,g_{11}^0+\CO(\e^2),$  $g_{02}=\e\, g_{02}^0+\CO(\e^2),$ and $g_{21}=\e\,g_{21}^0+\CO(\e^2).$ 
From \eqref{lam_e}, $e^{i\theta_{\e}}=\eta_{\e}(0)=\lambda(\mu(\e),\e)=1+\e (i \omega_0)+\CO(\e^2),$ thus
\begin{align*}
&\dfrac{e^{-i\theta_\e}g_{21}}{2}=\e\dfrac{g_{21}^0}{2}+\CO(\e^2)\quad\text{and}\\
&\dfrac{(1-2e^{i\theta_{\e}})e^{-2i\theta_{\e}}}{2(1-e^{i\theta_{\e}})}g_{20}g_{11}=-\e \dfrac{i}{2\omega_0}g_{20}^0 g_{11}^0+\CO(\e^2).
\end{align*}
Hence, substituting the above expressions into \eqref{ell1}, we obtain
\begin{equation}\label{ell2}
\ell_1=\dfrac{\e}{2}\left(\textrm{Re}(g_{21}^0)-\dfrac{\textrm{Re}(i\,g_{20}^0 g_{11}^0)}{\omega_0}\right)+\CO(\e^2).
\end{equation}
Moreover, from \eqref{comp} and \eqref{mult0}, we compute
\begin{align*}
\textrm{Re}(g_{21}^0)=&\dfrac{1}{4}\left(\dfrac{\partial ^3\bg_1^1(\bx_{\mu_0};\mu_0)}{\partial x^3}+\dfrac{\partial ^3\bg_1^1(\bx_{\mu_0};\mu_0)}{\partial x\partial y^2}+\dfrac{\partial ^3\bg_1^2(\bx_{\mu_0};\mu_0)}{\partial x^2\partial y}+\dfrac{\partial ^3\bg_1^2(\bx_{\mu_0};\mu_0)}{\partial y^3}\right),\nonumber\\
\textrm{Re}(i\,g_{20}^0 g_{11}^0)=&\dfrac{1}{4}\left(\dfrac{\partial ^2\bg_1^2(\bx_{\mu_0};\mu_0)}{\partial x\partial y}\Big(\dfrac{\partial ^2\bg_1^2(\bx_{\mu_0};\mu_0)}{\partial x^2}+\dfrac{\partial ^2\bg_1^2(\bx_{\mu_0};\mu_0)}{\partial y^2}\Big)-\dfrac{\partial ^2\bg_1^1(\bx_{\mu_0};\mu_0)}{\partial x\partial y}\right.\nonumber\\
&\cdot\Big(\dfrac{\partial ^2\bg_1^1(\bx_{\mu_0};\mu_0)}{\partial x^2}+\dfrac{\partial ^2\bg_1^1(\bx_{\mu_0};\mu_0)}{\partial y^2}\Big)+\dfrac{\partial ^2\bg_1^1(\bx_{\mu_0};\mu_0)}{\partial x^2}\dfrac{\partial ^2\bg_1^2(\bx_{\mu_0};\mu_0)}{\partial x^2}\nonumber\\
&\left.-\dfrac{\partial ^2\bg_1^1(\bx_{\mu_0};\mu_0)}{\partial y^2}\dfrac{\partial ^2\bg_1^2(\bx_{\mu_0};\mu_0)}{\partial y^2}\right).
\end{align*}
Substituting the above expressions  into \eqref{ell2} we conclude that
\begin{align*}
\ell_1=\e \, \ell_{1,1}+\CO(\e^2),
\end{align*}
where $\ell_{1,1}$ is given by \eqref{l1TA}. From hypothesis, $\ell_{1,1}\neq 0.$ Therefore, we can choose $\e_3>0,$ $0<\e_3<\e_2,$ in order that $\mathrm{sgn}(\ell_1)=\mathrm{sgn}(\ell_{1,1})$ for every $\e\in(0,\e_3).$ 

Then, for each $\e\in(0,\e_3),$ applying Theorem \ref{l5} for the map $H_{\e},$  we get the existence of neighborhoods $U_{\e}\subset\R^2$ of the fixed point $\by=0$ and $I_{\e}\subset I'_{\e}$ of the critical parameter $\sigma=0$ for which items $(i),$ $(ii),$ and $(iii)$ of Theorem \ref{l5} holds. Going back through the change of variables and parameters we get the existence of  neighborhoods $U'_{\e}\subset \Omega$ of the fixed point $\bx=\bxi(\mu(\e),\e)$ and  $J_{\e}\subset J_0$ of the critical parameter $\mu=\mu(\e)$ for which the following statements hold.
\begin{itemize}
\item[$(i)$] For $\mu\in J_{\e}$ such that $\ell_{1,1}(\mu-\mu(\e))\geq0,$ 
the fixed point $\bx=\bxi(\mu(\e),\e)$ is unstable (resp. asymptotically stable), provided that $\ell_1>0$ (resp. $\ell_1<0$), and the Poincar\'{e} map \eqref{frm} does not admit any invariant closed curve in $U'_{\e}.$ 

\item[$(ii)$] For $\mu\in J_{\e}$ such that $\ell_{1,1}(\mu-\mu(\e))<0,$ the Poincar\'{e} map \eqref{frm} admits a unique invariant closed curve $S_{\mu,\e}$ in $U'_{\e}$ surrounding the fixed point $\bxi(\mu,\e).$  Moreover, $S_{\mu,\e}$ is unstable (resp. asymptotically stable), whereas the fixed point $\bx=\bxi(\mu,\e)$ is asymptotically stable (resp. unstable), provided that $\ell_1>0$ (resp. $\ell_1<0).$

\item[$(iii)$] $S_{\mu,\e}$ is the unique invariant closed curve of the Poincar\'{e} map  \eqref{frm} bifurcating from the fixed point $\bx=\bxi(\mu(\e),\e)$ in $U'_{\e}$ as $\mu$ pass through $\mu(\e).$
\end{itemize}

  Finally, define $\U_{\e}$ as the saturation of $\{0\}\times U'_{\e}$ through $\Phi$, that is, $\U_{\e}=\{\Phi(t,\bx;\mu,\e):t\in[0,T],\bx\in U'_{\e}\}.$ Hence, the proof of Theorem \ref{teo1} follows by noticing that the  invariant closed curve $S_{\mu,\e}$ in $U'_{\e}$ of the Poincar\'{e} map \eqref{frm} surrounding the fixed point $\bxi(\mu,\e)$ corresponds to an invariant torus $T_{\mu,\e}$ in $\U_{\e}$ of the differential equation \eqref{tm1} (defined in the extended phase space $\s^1\times \Omega$) surrounding the periodic orbit $\Phi(t;\mu,\e).$
 \end{proof}

\section{Higher order approach}\label{HOA}
In this section, we consider two-parameter families of nonautonomous differential equations given by
\begin{equation}\label{s1}
\dot\bx(t)=\sum_{i=1}^k\e^i \bF_i(t,\bx;\mu)+\e^{k+1} \widetilde{\bF}(t,\bx;\mu,\e).
\end{equation}
Here, $\bF_i,$ $i=1,2,\ldots,k,$ and $\widetilde\bF$ are  sufficiently smooth functions and $T$-periodic in the variable $t\in\R,$  $\bx=(x,y)\in\Omega$ with $\Omega$ an open bounded subset of $\mathbb{R}^2,$  $\e \in (-\e_0,\e_0)$ for some $\e_0>0$ small, and $\mu\in\R.$ 

In what follows we shall apply the same ideas of the previous section for obtaining a higher order version of Theorem \ref{teo1}.

\subsection{Setting of the problem} Consider the averaged functions $\bg_i,$ $i=1,2,\ldots,k,$ as defined in Appendix. Let $l,$ $1\leq l<k,$ be the subindex of the first non-vanishing averaging function.  From Lemma \ref{lap} of the Appendix,  the {\it Poincar\'{e} Map} of the differential equation \eqref{s1}, defined on the transversal section $\Sigma=\{0\}\times \Omega,$ writes 
\begin{equation}\label{frm2}
\bx\mapsto P(\bx;\mu,\e)=\bx+\e^l {\bf G}(\bx, \mu, \e),
\end{equation}
where $$
{\bf G}(\bx, (\mu, \e))=\bg_l(\bx;\mu)+\e^1 \bg_{l+1}(\bx;\mu)+\dots+ \e^{k-l}\bg_{k}(\bx;\mu)+\e^{k+1-l}\widetilde{\bf G}(\bx;\mu,\e)
.$$

As a first hypothesis we assume that the averaged system of order $l$
\begin{equation}\label{tasl}
\dot \bx=\e^l \bg_l(\bx;\mu)
\end{equation}
has a Hopf point at $(\bx_{\mu_0};\mu_0).$ Equivalently, suppose that
\begin{itemize}
\item[{\bf B1.}] {\it there exists a continuous curve $\mu\in J\mapsto\bx_\mu \in \Omega,$ defined in an interval $J\ni \mu_0,$ such that $\bg_l(\bx_\mu;\mu)=0$ for every $\mu \in J$ and  the pair of complex conjugated eigenvalues $\al(\mu)\pm i \beta(\mu)$ of  $D_\bx\bg_l(\bx_\mu;\mu)$ satisfies $\al(\mu_0)=0$ and $\beta(\mu_0)=\omega_0>0.$  }
\end{itemize} 

Here, the proof of Lemma \ref{l1} can be followed straightly in order to get a neighborhood $J_0\subset J$ of $\mu_0,$  a parameter $\e_1,$ $0<\e_1<\e_0,$ and a unique function $\bxi: J_0\times(-\e_1,\e_1)\rightarrow \R^2$ satisfying $\bxi(\mu,0)=\bx_{\mu}$ and $P(\bxi(\mu,\e);\mu,\e)=\bxi(\mu,\e),$ for every $(\mu,\e)\in J_0\times (-\e_1,\e_1).$ This provides the following lemma.

\begin{lemma}\label{l1higher}
 Assume that hypothesis {\bf B1} holds. Then, there exists a neighborhood $J_0\subset J$ of $\mu_0$ and $\e_1,$ $0<\e_1<\e_0$ such that, for every $(\mu,\e)\in J_0\times(-\e_1,\e_1)$ the differential equation \eqref{s1} admits a unique $T$-periodic orbit $\f(t;\mu,\e)$ satisfying $\f(0;\mu,\e)\to \bx_{\mu}$ as $\e\to0.$
\end{lemma}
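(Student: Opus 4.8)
The plan is to repeat the proof of Lemma~\ref{l1} almost verbatim, replacing the Poincar\'e map \eqref{frm} by \eqref{frm2} and the first averaged function $\bg_1$ by $\bg_l$, the first nonvanishing averaged function. First I would introduce the auxiliary map
\[
f(\bx,\mu,\e):=\dfrac{P(\bx;\mu,\e)-\bx}{\e^l}=\bg_l(\bx;\mu)+\e\,\bg_{l+1}(\bx;\mu)+\cdots+\e^{k-l}\bg_k(\bx;\mu)+\e^{k+1-l}\widetilde{\bf G}(\bx;\mu,\e).
\]
By Lemma~\ref{lap} (which already incorporates the identical vanishing of $\bg_1,\dots,\bg_{l-1}$ in the structure of the expansion), the right-hand side is well defined and as smooth as $\widetilde{\bf G}$ on $\Omega\times J\times(-\e_0,\e_0)$, in particular across $\e=0$, and $f(\bx,\mu,0)=\bg_l(\bx;\mu)$. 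Hypothesis {\bf B1} then gives $f(\bx_{\mu_0},\mu_0,0)=\bg_l(\bx_{\mu_0};\mu_0)=0$ and, along the curve $\mu\mapsto\bx_\mu$,
\[
\dfrac{\p f}{\p\bx}(\bx_\mu,\mu,0)=D_\bx\bg_l(\bx_\mu;\mu).
\]

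Next I would use {\bf B1} once more: the eigenvalues $\al(\mu)\pm i\beta(\mu)$ of $D_\bx\bg_l(\bx_\mu;\mu)$ satisfy $\al(\mu_0)=0$ and $\beta(\mu_0)=\omega_0>0$, so by continuity there is a neighborhood $J_0\subset J$ of $\mu_0$ with $\beta(\mu)\neq0$ for every $\mu\in\ov{J_0}$, whence $\det D_\bx\bg_l(\bx_\mu;\mu)=\al(\mu)^2+\beta(\mu)^2\neq0$ on $\ov{J_0}$. Applying the Implicit Function Theorem at each point $(\bx_\mu,\mu,0)$ and using the compactness of $\ov{J_0}$ to obtain a uniform radius, I get $\e_1$, $0<\e_1<\e_0$, and a unique function $\bxi:\ov{J_0}\times(-\e_1,\e_1)\to\R^2$ with $\bxi(\mu,0)=\bx_\mu$ and $f(\bxi(\mu,\e),\mu,\e)=0$, that is, $P(\bxi(\mu,\e);\mu,\e)=\bxi(\mu,\e)$, for all $(\mu,\e)\in\ov{J_0}\times(-\e_1,\e_1)$. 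Since $P$ is the time-$T$ Poincar\'e map of \eqref{s1} on $\Sigma=\{0\}\times\Omega$, the fixed point $\bxi(\mu,\e)$ corresponds to a unique $T$-periodic solution $\f(t;\mu,\e)$ of \eqref{s1} near $\bx_\mu$ with $\f(0;\mu,\e)=\bxi(\mu,\e)$, and $\f(0;\mu,\e)=\bxi(\mu,\e)\to\bxi(\mu,0)=\bx_\mu$ as $\e\to0$ by continuity of $\bxi$.

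I do not expect a genuine obstacle here, since the argument is structurally identical to that of Lemma~\ref{l1}. The only two points deserving explicit attention are: (a) that dividing $P(\bx;\mu,\e)-\bx$ by $\e^l$ still yields a map that is smooth across $\e=0$, which is precisely why one works with the Poincar\'e expansion \eqref{frm2} in the form supplied by Lemma~\ref{lap} rather than with the raw time-$T$ map; and (b) that the Implicit Function Theorem must be applied simultaneously along the entire curve $\mu\mapsto\bx_\mu$ so that a common $\e_1$ exists, which is exactly where the compactness of $\ov{J_0}$ enters, as in the proof of Lemma~\ref{l1}.
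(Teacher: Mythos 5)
Your proposal is correct and follows essentially the same route as the paper: the authors explicitly state that the proof of Lemma~\ref{l1} ``can be followed straightly,'' and your argument is exactly that adaptation — working with $f=(P-\mathrm{id})/\e^l={\bf G}$ from \eqref{frm2}, noting $f(\cdot,\cdot,0)=\bg_l$, invoking the nonvanishing of $\det D_\bx\bg_l(\bx_\mu;\mu)=\al(\mu)^2+\beta(\mu)^2$ on $\ov{J_0}$, and using the Implicit Function Theorem together with compactness of $\ov{J_0}$ to obtain a uniform $\e_1$. Your added remarks on smoothness across $\e=0$ after dividing by $\e^l$ are a correct and welcome clarification of a point the paper leaves implicit.
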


We notice that, when  the differential equation \eqref{s1} is defined in the extended phase space $\s^1\times\Omega,$  such a periodic solution is given by $\Phi(t;\mu,\e)=(t,\f(t;\mu,\e)).$
 
We also assume the following transversal hypothesis.
\begin{itemize}
\item[{\bf B2.}] {\it Let $\al(\mu)\pm i \beta(\mu)$ be the pair of complex conjugated eigenvalues of $D_\bx\bg_l (\bx_\mu;\mu)$ such that $\al(\mu_0)=0,$ $\beta(\mu_0)=\omega_0>0.$ Assume that
$$
\dfrac{d\al(\mu)}{d\mu}\Big|_{\mu=\mu_0}=d\neq 0.
$$} 
\end{itemize} 

The proof of Lemma \ref{l2} can also be followed directly in order to get the following result.

\begin{lemma}\label{l2higher}
For each $(\mu,\e)\in J_0\times (-\e_1,\e_1),$ let $\lambda(\mu,\e)$ and  $\ov{\lambda(\mu,\e)}$ be the pair of complex conjugated eigenvalues of $D_{\bx} P (\bxi(\mu,\e);\mu,\e)$ and assume that hypotheses {\bf B1} and {\bf B2} hold. Then, there exists $\e_2,$ $0<\e_2<\e_1,$ and a unique smooth function $\mu:(-\e_2,\e_2)\rightarrow J_0,$ with  $\mu(0)=\mu_0,$ satisfying
\smallskip
\begin{itemize}
\item[{\bf 1.}] $|\lambda(\mu(\e),\e)|=1,$ 

\item[{\bf 2.}] $\big(\lambda(\mu({\e}),\e)\big)^k\neq 1,$ for $k\in\{1,2,3,4\},$ and

\item[{\bf 3.}] $\dfrac{d}{d\mu} |\lambda(\mu,\e)|\Big|_{\mu=\mu({\e})}\neq 0.$
\end{itemize}
\end{lemma}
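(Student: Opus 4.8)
The plan is to reduce everything to an application of the already-proven Lemmas~\ref{l1} and~\ref{l2}, since the higher-order setting differs only cosmetically. First I would observe that, by Lemma~\ref{l1higher} (whose proof mimics that of Lemma~\ref{l1} with $\bg_1$ replaced by $\bg_l$ and the factor $\e$ replaced by $\e^l$), the fixed point $\bxi(\mu,\e)$ of the Poincar\'{e} map~\eqref{frm2} exists on $J_0\times(-\e_1,\e_1)$ with $\bxi(\mu,0)=\bx_\mu$. The key computation is then the Jacobian at the fixed point: from~\eqref{frm2} one has
\[
D_\bx P(\bxi(\mu,\e);\mu,\e)=\mathrm{Id}+\e^l\, D_\bx\bg_l(\bx_\mu;\mu)+\CO(\e^{l+1}),
\]
whose eigenvalues are $\lambda(\mu,\e)=1+\e^l\big(\al(\mu)+i\beta(\mu)\big)+\CO(\e^{l+1})$ and its conjugate. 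Hence
\[
|\lambda(\mu,\e)|^2=1+2\e^l\al(\mu)+\CO(\e^{l+1})=1+\e^l\,\ell(\mu,\e),
\]
with $\ell(\mu,\e)=2\al(\mu)+\CO(\e)$.

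Next I would apply the Implicit Function Theorem to $\ell$. Hypotheses {\bf B1} and {\bf B2} give $\ell(\mu_0,0)=2\al(\mu_0)=0$ and $\partial_\mu\ell(\mu_0,0)=2\al'(\mu_0)=2d\neq0$, so there exist $\e_2\in(0,\e_1)$ and a unique smooth $\mu:(-\e_2,\e_2)\to J_0$ with $\mu(0)=\mu_0$ and $\ell(\mu(\e),\e)\equiv0$; this yields $|\lambda(\mu(\e),\e)|=1$, proving statement~{\bf 1}. For statement~{\bf 3} I would differentiate $|\lambda(\mu,\e)|^2=1+\e^l\ell(\mu,\e)$ in $\mu$ at $\mu=\mu(\e)$ and use $\ell(\mu(\e),\e)=0$ to get $\frac{\p}{\p\mu}|\lambda(\mu,\e)|\big|_{\mu=\mu(\e)}=\tfrac12\e^l\,\partial_\mu\ell(\mu(\e),\e)+\CO(\e^{l+1})=d\,\e^l+\CO(\e^{l+1})$, which is nonzero for $\e\neq0$ small after possibly shrinking $\e_2$. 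For statement~{\bf 2}, evaluating along the curve gives $\lambda(\mu(\e),\e)=1+\e^l\big(\al(\mu(\e))+i\beta(\mu(\e))\big)+\CO(\e^{l+1})=1+i\omega_0\e^l+\CO(\e^{l+1})$, since $\al(\mu(\e))=\al(\mu(\e))-\al(\mu_0)=\CO(\mu(\e)-\mu_0)=\CO(\e^l)$ is absorbed into the error and $\beta(\mu(\e))\to\omega_0>0$; thus $\lambda(\mu(\e),\e)\to1$ nontangentially to the unit circle, so by shrinking $\e_2$ once more we ensure $\lambda(\mu(\e),\e)\notin\{\pm1,\pm i,-\tfrac12\pm i\tfrac{\sqrt3}{2}\}$ for $\e\in(-\e_2,\e_2)\setminus\{0\}$, whence $\big(\lambda(\mu(\e),\e)\big)^k\neq1$ for $k\in\{1,2,3,4\}$.

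The only point requiring a little care — and the closest thing to an obstacle — is making sure the error terms in the eigenvalue expansion really are of order $\e^{l+1}$ uniformly in $\mu$ over the compact closure $\ov{J_0}$, so that the Implicit Function Theorem argument and the subsequent shrinking of $\e_2$ are legitimate; this follows from the smoothness of $\bg_l,\ldots,\bg_k,\widetilde{\bf G}$ and of $\bxi$ together with compactness of $\ov{J_0}$, exactly as in the proof of Lemma~\ref{l2}. Since no new ideas beyond those already deployed for $l=1$ are needed, I would simply state that the proof of Lemma~\ref{l2} carries over verbatim with $\e$ replaced by $\e^l$, which is what the paragraph preceding the statement already announces.
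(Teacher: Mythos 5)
Your proposal is correct and follows essentially the same route as the paper, which itself only remarks that the proof of Lemma~\ref{l2} ``can be followed directly'': you replace $\e$ by $\e^l$ in the eigenvalue expansion, apply the Implicit Function Theorem to $\ell(\mu,\e)=2\al(\mu)+\CO(\e)$ using {\bf B2}, and verify the three statements exactly as in the first-order case. (The only nit is your claim $\al(\mu(\e))=\CO(\e^{l})$; in general one only gets $\mu(\e)-\mu_0=\CO(\e)$ and hence $\al(\mu(\e))=\CO(\e)$, but this still makes $\e^l\al(\mu(\e))=\CO(\e^{l+1})$, so the conclusion is unaffected.)
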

 We emphasize  that the functions $\bxi(\mu,\e)$ and $\mu(\e)$ can be both explicitly expanded in Taylor series around $\e=0$ up to order $\e^{k}.$ Due to the complexity of the coefficients of these expansions, we shall omit them here. 

Now, applying the change of variables $\bx=\by+\bxi(\mu,\e)$ and taking $\mu=\sigma+\mu(\e),$ the Poincar\'{e} map \eqref{frm2} writes
\begin{equation}\label{sish2}
\by \mapsto {\bf H}_\e(\by;\sigma):=\by+\e^l {\bf G}\left(\by+\bxi\left(\sigma+\mu(\e),\e\right), \sigma+\mu(\e),\e\right).
\end{equation}

Now, for each $\e\in(0,\e_2),$ denote by $I'_{\e}$ the set of $\sigma\in\R$ such that $\sigma+\mu(\e)\in J_0.$ Since, from Lemma \ref{l2higher}, $\mu(\e)\in J_0,$ we get that $0\in I'_{\e}.$  Thus, for each $\e\in(0,\e_2),$ Lemma \ref{l1higher} implies that $\by=0$ is a fixed point of \eqref{sish2} for every $\sigma\in I'_{\e}.$ Notice that $\eta_{\e}(\sigma)=\lambda(\sigma+\mu(\e),\e)$ and $\ov {\eta_{\e}(\sigma)}=\ov{\lambda(\sigma+\mu(\e),\e)}$ are the eigenvalues of $D_{\by} {\bf H}_\e(0;\sigma).$ Denote $\eta_{\e}(\sigma)=r_{\e}(\sigma)e^{ i \varphi_{\e}(\sigma)}$ and $\varphi_{\e}(0)=\T_{\e},$ $0<\T_{\e}<\pi.$ Thus, from Lemma \ref{l2higher}, we get
\[
r_{\e}(0)=1,\,\, e^{ik\T_{\e}}\neq 1,\,\, \text{for}\,\, k\in\{1,2,3,4\},\,\, and\,\, r'_{\e}(0)\neq 0,
\]
for every $\e\in(0,\e_2).$ Therefore, the map \eqref{sish2} satisfies all the conditions of Theorem \ref{l5} but $C.3$ for each $\e\in(0,\e_2).$  

Theorem \ref{teo1} will be generalized by showing that the Poincar\'{e} map \eqref{sish2} admits a Neimark-Sacker bifurcation. From here, it remains to show that condition $C.3$ holds. Accordingly, we need to compute $\ell_1$ as defined in \eqref{ell1}. Following the procedure of Section \ref{NSB}, we first compute the Taylor expansion of ${\bf H}_\e(\by;0)$ around $\by=0$ as
$$
{\bf H}_{\e}(\by,0)= A_\e\by+\dfrac{1}{2}B_\e(\by,\by)+\dfrac{1}{6} C_\e(\by,\by,\by)+\CO(|\by|^4),
$$
where  $B_\e({\bf u},{\bf v})$ and $C_\e({\bf u},{\bf v},{\bf w})$ are the multilinear functions defined in \eqref{comp}, and $A=D_{\bx} {\bf H}_{\e}(0,0).$  Moreover,
 $A_\e=Id+\e^l \CA_{\e}+\CO(\e^{k+1}),$ $B_\e=\e^l\CB_{\e} +\CO(\e^{k+1}),$  $C_\e=\e^l\CC_{\e}+\CO(\e^{k+1}),$ where
\begin{equation}\label{TeABC}
\begin{array}{l}
\CA_{\e}= A_l+\e A_{l+1}+\dots+\e^{k-l} A_k,\vspace{0.2cm}\\
\CB_{\e}({\bf u},{\bf v})= B_l({\bf u},{\bf v})+\e B_{l+1}({\bf u},{\bf v})+\dots+\e^{k-l} B_{k}({\bf u},{\bf v}),\vspace{0.2cm}\\
\CC_{\e}({\bf u},{\bf v},{\bf w})= C_l({\bf u},{\bf v},{\bf w})+\e C_{l+1}({\bf u},{\bf v},{\bf w})+\dots+\e^{k-l} C_{k}({\bf u},{\bf v},{\bf w}).
\end{array}
\end{equation}
We stress that $\CA_{\e},$ $\CB_{\e},$ and $\CC_{\e}$ can be explicitly computed. Due to the complexity of these expressions, we shall omit them here.

Through a linear change of variables  in \eqref{sish2}, we can assume,  without loss of generality, that
\begin{itemize}
\item[${\bf B3.}$] {\it for each $\e\in(0,\e_3),$ the  matrix  $Id+\e^l\CA_{\e}$ is in its real Jordan normal form.}
\end{itemize} 
This implies that
\begin{equation*}\label{dgj}
Id+\e^l\CA_{\e}=\begin{pmatrix}
1+\widetilde \al_\e& -\widetilde \beta_\e\\
&\\
\widetilde \beta_\e &1+ \widetilde \al_\e
\end{pmatrix},
\end{equation*}
where $ \widetilde \al_\e+i \widetilde\beta_\e$ and $\widetilde \al_\e-i \widetilde\beta_\e$ are the eigenvalues of $\CA_{\e}.$ Considering
\begin{equation}\label{aue}
e^{i \theta_\e}=\eta_{\e}(0)=1+\sum^{k}_{j=l}\e^{j}(\al_j+i \beta_j)+\CO(\e^{k+1}),
\end{equation} with $\al_l=0$ and $\beta_l=\omega_0>0,$
we have that
\[
\widetilde \al_\e= \sum^{k}_{j=l}\e^{j}\al_j, \quad\text{and}\quad \widetilde \beta_\e= \sum^{k}_{j=l}\e^{j}\beta_j.
\]

Finally, for ${\bf p}=(1,-i)/\sqrt{2}\in \mathbb{C}^2,$ define the number 
\begin{align*}
\ell_1^{\e}=&-\textrm{Re}\left(\dfrac{(1-2 e^{i\theta_\e})e^{-2i\theta_\e}}{2(1-e^{i\theta_\e})}\left\langle {\bf p}, \e^l\CB_\e({\bf p},{\bf p}) \right\rangle \left\langle {\bf p}, \e^l\CB_\e({\bf p},\ov {\bf p}) \right\rangle\right) \\
&+\textrm{Re}\left(e^{-i\theta_\e}\left\langle {\bf p}, \e^l\CC_\e({\bf p}, {\bf p}, \ov {\bf p})\right\rangle\right) -\dfrac{|\left\langle {\bf p}, \e^l\CB_\e({\bf p},\ov{\bf p})\right\rangle|^2}{2}-\dfrac{|\left\langle {\bf p}, \e^l\CB_\e(\ov{\bf p},\ov{\bf p})\right\rangle|^2}{4},
\end{align*}
and consider its Taylor expansion around $\e=0,$ which can be explicitly computed as
\begin{equation}\label{Tel1}
\ell_1^{\e}=\e^l\ell_{1,l}+\e^{l+1}\ell_{1,l+1}+\e^{l+2}\ell_{1,l+2}+\cdots+\e^{k}\ell_{1,k}+\CO(\e^{k+1}).
\end{equation}
In Section \ref{SLC} we provide the expressions of $\ell_{1,j},$ $l\leq j\leq k.$

\subsection{Higher order torus bifurcation} As our second main result, Theorem \ref{teo1} is generalized as follows.

\begin{mtheorem}\label{teo2}
Let $l,$ $1\leq l\leq k,$ be the subindex of the first non-vanishing averaging function and $\ell_{1,j},$ $l\leq j\leq k,$ as defined in \eqref{Tel1}. In addition to hypotheses {\bf B1}, {\bf B2}, and {\bf B3}, assume that $\ell_{1,j}\neq0$ for some $l\leq j\leq k.$ Let $j^*,$ $l\leq j^*\leq k,$ be the first subindex such that $\ell_{1,j^*}\neq0.$ Then, for each $\e>0$ sufficiently small there exist a $C^1$ curve $\mu(\e)\in J_0,$ with  $\mu(0)=\mu_0,$ and neighborhoods $\U_{\e}\subset \s^1\times \Omega$ of the periodic solution $\Phi(t;\mu(\e),\e)$ and  $J_{\e}\subset J_0$ of $\mu(\e)$ for which the following statements hold.\begin{itemize}

\item[$(i)$] For $\mu\in J_{\e}$ such that $\ell_{1,j^*}(\mu-\mu(\e))\geq0,$ the periodic orbit $\Phi(t;\mu(\e),\e)$ is unstable (resp. asymptotically stable), provided that $\ell_{1,j^*}>0$ (resp. $\ell_{1,j^*}<0$), and the differential equation \eqref{s1} does not admit any invariant tori in $\U_{\e}.$

\item[$(ii)$] For $\mu\in J_{\e}$ such that $\ell_{1,j^*}(\mu-\mu(\e))<0,$ the differential equation \eqref{s1} admits a unique invariant torus $T_{\mu,\e}$ in $\U_{\e}$ surrounding the periodic orbit $\Phi(t;\mu,\e).$ Moreover,  $T_{\mu,\e}$ is unstable (resp. asymptotically stable), whereas the periodic orbit $\Phi(t;\mu,\e)$ is asymptotically stable (resp. unstable), provided that $\ell_{1,j^*}>0$ (resp. $\ell_{1,j^*}<0$). 

\item[$(iii)$] $T_{\mu,\e}$ is the unique invariant torus of the differential equation \eqref{s1} bifurcating from the periodic orbit $\Phi(t;\mu(\e),\e)$ in $\U_{\e}$ when $\mu$ passes through $\mu(\e).$
\end{itemize}
\end{mtheorem}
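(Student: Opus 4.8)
The plan is to mimic the proof of Theorem~\ref{teo1} verbatim, replacing the first-order objects by their $l$-th order analogues, and to reduce everything, once more, to a single application of Theorem~\ref{l5} to the map $\mathbf{H}_\e$ from \eqref{sish2}. All the preparatory work has already been done in the text preceding the statement: Lemma~\ref{l1higher} gives the fixed point $\bxi(\mu,\e)$ and hence the periodic solution $\Phi(t;\mu,\e)$; Lemma~\ref{l2higher} gives the critical curve $\mu(\e)$ and establishes conditions $C.1$ and $C.2$ of Theorem~\ref{l5} for $\mathbf{H}_\e$ on the interval $(0,\e_2)$; and hypothesis {\bf B3} puts $Id+\e^l\CA_\e$ into real Jordan normal form, so that one may take ${\bf p}={\bf q}=(1,-i)/\sqrt2$ exactly as in Section~\ref{PTA}. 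The only remaining point is to verify condition $C.3$, namely that the Lyapunov coefficient $\ell_1$ of $\mathbf{H}_\e$ at $\by=0$ is nonzero for all sufficiently small $\e>0$, and to track its sign.

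First I would compute $\ell_1$ for $\mathbf{H}_\e$ from formula \eqref{ell1}. Using the expansions $A_\e=Id+\e^l\CA_\e+\CO(\e^{k+1})$, $B_\e=\e^l\CB_\e+\CO(\e^{k+1})$, $C_\e=\e^l\CC_\e+\CO(\e^{k+1})$ together with $e^{i\theta_\e}=\eta_\e(0)=1+\sum_{j=l}^k\e^j(\al_j+i\beta_j)+\CO(\e^{k+1})$ with $\al_l=0$, $\beta_l=\omega_0>0$, one substitutes into the $g_{ij}$ of \eqref{gij}. Since ${\bf p}$ is fixed and the bilinear/trilinear parts carry a factor $\e^l$, each $g_{ij}$ is $\e^l$ times an analytic function of $\e$; substituting into \eqref{ell1} and keeping track of the singular prefactor $(1-2e^{i\theta_\e})e^{-2i\theta_\e}/\big(2(1-e^{i\theta_\e})\big)$, which behaves like $-i/(2\omega_0\e^l)+\CO(\e^{1-l})$ because $1-e^{i\theta_\e}=-i\omega_0\e^l+\CO(\e^{l+1})$, one sees that the product $g_{20}g_{11}$ (of order $\e^{2l}$) multiplied by this prefactor is again of order $\e^l$. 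Hence $\ell_1=\ell_1^\e+\CO(\e^{k+1})$ where $\ell_1^\e$ is exactly the quantity defined just before \eqref{Tel1}, whose Taylor expansion is \eqref{Tel1}: $\ell_1^\e=\e^l\ell_{1,l}+\cdots+\e^k\ell_{1,k}+\CO(\e^{k+1})$. By the hypothesis that $j^*$ is the first index with $\ell_{1,j^*}\neq0$, we get $\ell_1=\e^{j^*}\ell_{1,j^*}+\CO(\e^{j^*+1})$, so there is $\e_3\in(0,\e_2)$ with $\mathrm{sgn}(\ell_1)=\mathrm{sgn}(\ell_{1,j^*})$ for all $\e\in(0,\e_3)$. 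This establishes $C.3$.

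With $C.1$, $C.2$, $C.3$ all in force, for each fixed $\e\in(0,\e_3)$ Theorem~\ref{l5} applied to $\mathbf{H}_\e$ yields neighborhoods $U_\e\subset\R^2$ of $\by=0$ and $I_\e\subset I'_\e$ of $\sigma=0$ with the three stated conclusions, where the relevant sign is $\mathrm{sgn}(\ell_1)=\mathrm{sgn}(\ell_{1,j^*})$. Undoing the change of variables $\bx=\by+\bxi(\mu,\e)$ and $\mu=\sigma+\mu(\e)$ transports these to neighborhoods $U'_\e\subset\Omega$ of the fixed point $\bxi(\mu(\e),\e)$ and $J_\e\subset J_0$ of $\mu(\e)$; the invariant closed curve $S_{\mu,\e}$ of the Poincar\'e map \eqref{frm2} then corresponds, after saturating $\{0\}\times U'_\e$ by the flow via $\Phi$ to form $\U_\e$, to an invariant torus $T_{\mu,\e}$ of \eqref{s1} surrounding $\Phi(t;\mu,\e)$, with matching stability types and the uniqueness clause $(iii)$. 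This is the same final passage as in the proof of Theorem~\ref{teo1} and requires no new argument.

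The main obstacle is the bookkeeping in the computation of $\ell_1^\e$: one must be careful that the a priori singular factor $1/(1-e^{i\theta_\e})\sim 1/(\e^l\,i\omega_0)$ is exactly compensated by the $\e^{2l}$ order of $g_{20}g_{11}$, so that no negative powers of $\e$ survive and $\ell_1$ is genuinely $\CO(\e^l)$; checking that the leading coefficient of the resulting expansion coincides with the $\ell_{1,j}$ listed in Section~\ref{SLC} is a finite but lengthy symbolic verification. Everything else is a direct transcription of the first-order proof, since Lemmas~\ref{l1higher} and \ref{l2higher} were precisely designed to make hypotheses $C.1$ and $C.2$ automatic.
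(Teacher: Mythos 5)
Your proposal is correct and follows essentially the same route as the paper: both reduce the statement to verifying condition $C.3$ of Theorem~\ref{l5} for the map $\mathbf{H}_\e$ of \eqref{sish2} (conditions $C.1$ and $C.2$ being supplied by Lemmas~\ref{l1higher} and \ref{l2higher}), identify $\ell_1=\ell_1^\e+\CO(\e^{k+1})$ with leading term $\e^{j^*}\ell_{1,j^*}$, and then transport the resulting Neimark--Sacker bifurcation back to an invariant torus exactly as in the proof of Theorem~\ref{teo1}. Your explicit remark that the singular factor $1/(1-e^{i\theta_\e})\sim 1/(-i\omega_0\e^l)$ is compensated by the $\e^{2l}$ order of $g_{20}g_{11}$ is a correct and slightly more detailed account of the bookkeeping the paper packages into the definition of $\ell_1^\e$ and the expansion \eqref{TeE}.
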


\begin{proof}
As before, fix $\e_3$ (to be chosen later on) satisfying $0<\e_3<\e_2.$ We already have that for each $\e\in(0,\e_3)$ the map \eqref{0frm2} satisfies all the conditions of Theorem \ref{l5} but $C.3.$ 

In order to check condition $C.3$ we need to compute $\ell_1$ as defined in \eqref{ell1}. Accordingly, let ${\bf p}_{\e}\in \mathbb{C}^2$ and ${\bf q}_{\e}\in \mathbb{C}^2$ be, respectively, eigenvectors of $A_{\e}^\intercal$ and $A_{\e}$ satisfying $A_{\e}{\bf q}_{\e}=e^{ i \theta_{\e}}{\bf q}_{\e},$  $A_{\e}^\intercal{\bf p}_{\e}=e^{-i \theta_{\e}}{\bf p}_{\e},$ and $\langle {\bf p}_{\e},{\bf q}_{\e}\rangle=1.$ Analogous to the proof of Theorem \ref{teo1},  ${\bf p}_{\e}=\widetilde {\bf p}_{\e} +\CO(\e^{k-l+1})$ and ${\bf q}_{\e}=\widetilde {\bf q}_{\e}  +\CO(\e^{k-l+1}),$ where $\widetilde {\bf p}_{\e},\widetilde {\bf q}_{\e} \in \mathbb{C}^2$ satisfy $\CA_{\e}^\intercal \widetilde {\bf p}_{\e}=(\widetilde \al_\e-i \widetilde\beta_\e) \widetilde {\bf p}_{\e},$  $\CA_{\e}\widetilde {\bf q}_{\e}=(\widetilde \al_\e+i \widetilde\beta_\e)  \widetilde {\bf q}_{\e},$ and $\langle \widetilde {\bf p}_{\e},\widetilde {\bf q}_{\e}\rangle=1.$ Furthermore, from hypothesis {\bf B3}, $\CA_{\e}$ is in its normal Jordan form, thus we can take $\widetilde {\bf p}_{\e}=\widetilde {\bf q}_{\e}={\bf p}:=(1,-i)/\sqrt{2}.$ Hence,
\begin{align}\label{gijhigher}
\begin{split}
g_{20}=&\left\langle {\bf p}_{\e}, B_\e({\bf q}_{\e}, {\bf q}_{\e})\right\rangle=\left\langle {\bf p}, \e^l\CB_{\e}({\bf p}, {\bf p})\right\rangle+\CO(\e^{k+1}),\vspace{0.2cm}\\
g_{11}=&\left\langle {\bf p}_{\e}, B_\e({\bf q}_{\e}, {\bf \ov q}_{\e})\right\rangle=\left\langle {\bf p}, \e^l\CB_{\e}({\bf p}, {\bf \ov p})\right\rangle+\CO(\e^{k+1}),\vspace{0.2cm}\\
g_{02}=&\left\langle {\bf p}_{\e}, B_\e({\bf \ov q}_{\e}, {\bf \ov q}_{\e})\right\rangle=\left\langle {\bf p}, \e^l\CB_{\e}({\bf \ov p}, {\bf \ov p})\right\rangle+\CO(\e^{k+1}),\vspace{0.2cm}\\
g_{21}=&\left\langle {\bf p}_{\e}, C_\e({\bf  q}_{\e}, {\bf  q}_{\e}, {\bf \ov q}_{\e})\right\rangle=\left\langle {\bf p}, \e^l\CC_{\e}({\bf  p}, {\bf  p}, {\bf \ov p})\right\rangle+\CO(\e^{k+1}).
\end{split}
\end{align} 
Now, from \eqref{ell1}, we compute
\begin{equation}\label{a0e}
\ell_1= \ell_1^{\e} +\CO(\e^{k+1}),
\end{equation} 
where $\ell_1^{\e}$ is given by \eqref{Tel1}. From hypothesis,  $j^*,$ $l\leq j^*\leq k,$ is the first subindex such that $\ell_{1,j^*}\neq0.$ Therefore, we can choose $\e_3>0,$ $0<\e_3<\e_2,$ in order that $\mathrm{sgn}(\ell_1)=\mathrm{sgn}(\ell_{1,j^*})$ for every $\e\in(0,\e_3).$ From here, the proof follows analogously to the proof of Theorem \ref{teo1} by applying Theorem \ref{l5}.
\end{proof}

\subsection{Lyapunov Coefficient}\label{SLC}
Suppose that the expressions defined in \eqref{TeABC} for $\CA_{j},$ $\CB_{j},$ and $\C_j,$ $l\leq j\leq k,$ are known. If the Taylor expansion \eqref{aue} for $e^{i \T_{\e}}$ is explicitly known, then we can also compute the following Taylor expansions around $\e=0,$ 
 \begin{align}\label{TeE}
 \begin{split}
&e^{-i\theta_\e}=1+\e^l \sum_{n=0}^{k-2l}\e^nr_{n+l}+\CO(\e^{k-l+1})\quad \mbox{and}  \\
&\dfrac{\e^l(1-2e^{i\theta_\e}) e^{-2 i\T_{\e}}}{1-e^{-i\theta_\e}}=\sum_{n=0}^{k-l}\e^n s_n+\CO(\e^{k-l+1}).
\end{split}
\end{align}
Notice that $s_0=-\dfrac{i}{ \omega_0}$ and $r_l=-i \omega_0.$

In what follows, we provide the formulae for $\ell_{1,j},$ $l\leq j\leq k.$ Thus, let ${\bf p}=(1,-i)/\sqrt{2}$ and, for $m\leq k-l,$ denote
\begin{align*}
L_{m}=&\left\langle {\bf p}, C_{l+m}({\bf  p}, {\bf  p}, {\bf \ov p})\right\rangle-\hspace{-0.45 cm}\sum_{n_1+n_2+n_3=m}\hspace{-0.45 cm} s_{n_1}\left\langle {\bf p}, B_{l+n_2}({\bf p}, {\bf p})\right\rangle\left\langle {\bf p}, B_{l+n_3}({\bf p}, \ov{\bf p})\right\rangle,\\
\widetilde L_{m}=&\left\langle {\bf p}, C_{l+m}({\bf  p}, {\bf  p}, {\bf \ov p})\right\rangle-\hspace{-0.45 cm}\sum_{n_1+n_2+n_3=m}\hspace{-0.45 cm} s_{n_1}\left\langle {\bf p}, B_{l+n_2}({\bf p}, {\bf p})\right\rangle\left\langle {\bf p}, B_{l+n_3}({\bf p}, \ov{\bf p})\right\rangle\\
&+\hspace{-0.45 cm}\sum_{n_1+n_2=m}\hspace{-0.45 cm}r_{l+n_1}\left\langle {\bf p}, C_{l+n_2}({\bf  p}, {\bf  p}, {\bf \ov p})\right\rangle-\hspace{-0.45 cm}\sum_{n_1+n_2=m}\hspace{-0.45 cm}\langle {\bf p}, B_{l+n_1}({\bf p}, \ov{\bf p})\rangle\langle B_{l+n_1}({\bf p}, \ov{\bf p}), {\bf p}\rangle\\
&+\dfrac{1}{2}\hspace{-0.45 cm}\sum_{n_1+n_2=m}\hspace{-0.45 cm}\langle {\bf p}, B_{l+n_1}(\ov {\bf p}, \ov{\bf p})\rangle\langle B_{l+n_1}(\ov{\bf p}, \ov{\bf p}), {\bf p}\rangle.
\end{align*}

\begin{proposition}\label{propform}Assume the hypotheses of Theorem \ref{teo2} and consider the coefficients $\ell_{1,j},$ $l\leq j\leq k,$ as defined in \eqref{Tel1}. Then, 
\begin{align*}
\ell_{1,l+m}=\left\{
	\begin{array}{ll}
		\dfrac{1}{2}\textrm{Re}\left( L_{m}\right) & \mbox{for } \, 0\leq m < l ,\\
		& \\
		\dfrac{1}{2}\textrm{Re}\left(L_{m}+\widetilde L_{m-l} -L_{m-l}\right) & \mbox{for } \, l\leq m \leq k-l.
	\end{array}
\right.
\end{align*}
\end{proposition}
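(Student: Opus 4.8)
The plan is to obtain the coefficients $\ell_{1,j}$ by expanding the closed-form expression defining $\ell_1^{\e}$ around $\e=0$, viewing it as a product of power series in $\e$. First I would record the expansions of the ingredients. Substituting the polynomials \eqref{TeABC} and using $\e^l\CB_\e(\cdot,\cdot)=\sum_{j=l}^k\e^j B_j(\cdot,\cdot)$ and $\e^l\CC_\e(\cdot,\cdot,\cdot)=\sum_{j=l}^k\e^j C_j(\cdot,\cdot,\cdot)$, each of the Hermitian pairings $\langle{\bf p},\e^l\CB_\e({\bf p},{\bf p})\rangle$, $\langle{\bf p},\e^l\CB_\e({\bf p},\ov{\bf p})\rangle$, $\langle{\bf p},\e^l\CB_\e(\ov{\bf p},\ov{\bf p})\rangle$, $\langle{\bf p},\e^l\CC_\e({\bf p},{\bf p},\ov{\bf p})\rangle$ becomes a polynomial in $\e$ vanishing at $\e=0$ to order exactly $l$, whose coefficient of $\e^{l+i}$ is the corresponding pairing of $B_{l+i}$ (resp. $C_{l+i}$) against ${\bf p}$. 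On the scalar side, \eqref{aue} gives $1-e^{i\theta_\e}=-i\omega_0\e^l+\CO(\e^{l+1})$, so the rational factor $(1-2e^{i\theta_\e})e^{-2i\theta_\e}/(1-e^{i\theta_\e})$ has a pole of order $l$ at $\e=0$, and \eqref{TeE} records the expansions of $e^{-i\theta_\e}$ — equal to $1$ plus a term of order $\e^l$ with coefficients $r_l,r_{l+1},\dots$ — and of $\e^l(1-2e^{i\theta_\e})e^{-2i\theta_\e}/(1-e^{i\theta_\e})$, with coefficients $s_0,s_1,\dots$ (recall $s_0=-i/\omega_0$).

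The key observation is that $\ell_1^{\e}$ is analytic at $\e=0$ and of order $\e^l$: the only candidate singularity, the factor $1/(1-e^{i\theta_\e})$ in the first summand of $\ell_1^{\e}$, has a pole of order $l$, but it multiplies the product $\langle{\bf p},\e^l\CB_\e({\bf p},{\bf p})\rangle\langle{\bf p},\e^l\CB_\e({\bf p},\ov{\bf p})\rangle$, which vanishes to order $2l$; so that summand equals $\e^l$ times an analytic function, and its $\e^{l+m}$-coefficient equals $-\frac12\,\textrm{Re}\sum_{n_1+n_2+n_3=m}s_{n_1}\langle{\bf p},B_{l+n_2}({\bf p},{\bf p})\rangle\langle{\bf p},B_{l+n_3}({\bf p},\ov{\bf p})\rangle$, present for every $m\ge 0$. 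The remaining three summands of $\ell_1^{\e}$ are quadratic, resp. cubic, in the order-$\e^l$ pairings, hence contribute only at orders $\e^j$ with $j\ge 2l$, with the single exception of the contribution of the constant term $1$ in $e^{-i\theta_\e}$ to the $\CC_\e$-summand, which adds $\textrm{Re}\langle{\bf p},C_{l+m}({\bf p},{\bf p},\ov{\bf p})\rangle$ to the $\e^{l+m}$-coefficient. This explains the case split: for $0\le m<l$ only these two ``leading'' contributions can occur (all other terms contribute only at orders $\e^j$ with $j\ge 2l>l+m$), and one checks that they combine to $\frac12\textrm{Re}(L_m)$.

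For $l\le m\le k-l$ the subleading contributions come in, and the last step is to identify them with $\widetilde L_{m-l}-L_{m-l}$ — note that $\widetilde L_{\cdot}$ agrees with $L_{\cdot}$ in its first two terms, so this difference is precisely the three extra terms of $\widetilde L_{\cdot}$. Carrying the Cauchy products one order further: beyond $\textrm{Re}\langle{\bf p},C_{l+m}\rangle$, the $\CC_\e$-summand contributes $\textrm{Re}\sum_{n_1+n_2=m-l}r_{l+n_1}\langle{\bf p},C_{l+n_2}({\bf p},{\bf p},\ov{\bf p})\rangle$; and, using $\overline{\langle{\bf p},B(\cdot,\cdot)\rangle}=\langle B(\cdot,\cdot),{\bf p}\rangle$, the last two summands contribute $-\frac12\sum_{n_1+n_2=m-l}\langle{\bf p},B_{l+n_1}({\bf p},\ov{\bf p})\rangle\langle B_{l+n_2}({\bf p},\ov{\bf p}),{\bf p}\rangle$ and $-\frac14\sum_{n_1+n_2=m-l}\langle{\bf p},B_{l+n_1}(\ov{\bf p},\ov{\bf p})\rangle\langle B_{l+n_2}(\ov{\bf p},\ov{\bf p}),{\bf p}\rangle$. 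These three expressions are exactly the extra terms of $\widetilde L_{m-l}$, so the total $\e^{l+m}$-coefficient is $\frac12\textrm{Re}(L_m+\widetilde L_{m-l}-L_{m-l})$, as claimed; specializing $l=1$, $m=0$ recovers $\ell_{1,1}$ from the proof of Theorem \ref{teo1}, a useful consistency check. The hard part here is purely the bookkeeping of the Cauchy products — for each summand, isolating exactly the index triples or pairs whose indices sum to $l+m$, and checking that all coefficients and signs line up so that the many resulting terms collapse into the compact quantities $L_m$ and $\widetilde L_m$; there is no conceptual obstacle beyond this careful term-matching.
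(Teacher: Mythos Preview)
Your approach is essentially the same as the paper's: both obtain the coefficients $\ell_{1,j}$ by expanding $\ell_1^{\e}$ as a Taylor series in $\e$ via Cauchy products of the ingredient expansions \eqref{TeABC} and \eqref{TeE}, observing that the four summands of $\ell_1^{\e}$ naturally split into an order-$\e^l$ piece and an order-$\e^{2l}$ piece, which is exactly what drives the case split at $m=l$. The paper packages this split by introducing auxiliary quantities $K^1_\e$ and $K^2_\e$ with $\ell_1=\tfrac{\e^l}{2}\mathrm{Re}(K^1_\e)+\tfrac{\e^{2l}}{2}\mathrm{Re}(K^2_\e)+\CO(\e^{k+1})$ and then identifies $K^1_\e=\sum_m\e^m L_m$ and $K^2_\e=\sum_m\e^m(\widetilde L_m-L_m)$, whereas you go term by term, but the substance is identical.
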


\begin{proof}
Substituting \eqref{gijhigher} and \eqref{TeE} into \eqref{ell1} and collecting the coefficients of $\e^l$ and $\e^{2l},$ we have
\begin{equation}\label{ellK}
\ell_1=\e^l \dfrac{\textrm{Re}(K^1_{\e})}{2}+\e^{2l}\dfrac{\textrm{Re}(K^2_{\e})}{2}+\CO(\e^{k+1}),
\end{equation}
where
\begin{align*}
 \begin{split}
K^1_{\e}=&\left\langle {\bf p}, \CC_{\e}({\bf  p}, {\bf  p}, {\bf \ov p})\right\rangle)-\left\langle {\bf p}, \CB_{\e}({\bf p}, {\bf p})\right\rangle\left\langle {\bf p}, \CB_{\e}({\bf p}, \ov{\bf p})\right\rangle\sum_{n=0}^{k-l}\e^n s_n\\
K^2_{\e}=&\left\langle {\bf p}, \CC_{\e}({\bf  p}, {\bf  p}, {\bf \ov p})\right\rangle\sum_{n=0}^{k-2l}\e^nr_{l+n}-\left\langle {\bf p}, \CB_{\e}({\bf p},\ov {\bf p})\right\rangle\left\langle \CB_{\e}({\bf p}, \ov{\bf p}), {\bf p}\right\rangle\\
&-\dfrac{1}{2}\left\langle {\bf p}, \CB_{\e}({\bf p}, {\bf p})\right\rangle\left\langle  \CB_{\e}({\bf p}, {\bf p}),{\bf p}\right\rangle.
\end{split}
\end{align*}

First, notice that
\begin{align*}
\left\langle {\bf p}, \CB_{\e}({\bf q}, {\bf q})\right\rangle\left\langle {\bf p}, \CB_{\e}({\bf p}, \ov{\bf p})\right\rangle\sum_{n=0}^{k-l}\e^n s_n=& \sum^{k-l}_{m=0}\e^m\hspace{-0.45 cm}\sum_{n_1+n_2+n_3=m}\hspace{-0.45 cm} s_{n_1}\left\langle {\bf p}, B_{l+n_2}({\bf p}, {\bf p})\right\rangle\left\langle {\bf p}, B_{l+n_3}({\bf p}, \ov{\bf p})\right\rangle\\
&+\CO(\e^{k-l+1}).
\end{align*}
Thus, from \eqref{TeABC},
\begin{align}\label{K1}
K^1_{\e}=\sum_{m=0}^{k-l} \e^m L_m.
\end{align}

Now, since
\begin{align*}
\left\langle {\bf p}, \CC_{\e}({\bf  p}, {\bf  p}, {\bf \ov p})\right\rangle\sum_{n=0}^{k-2l}\e^nr_{l+n}&=\sum^{k-2l}_{m=0}\e^m\hspace{-0.45 cm}\sum_{n_1+n_2=m}\hspace{-0.45 cm}r_{l+n_1}\left\langle {\bf p}, C_{l+n_2}({\bf  p}, {\bf  p}, {\bf \ov p})\right\rangle+\CO(\e^{k-2l+1})\\
\left\langle {\bf p}, \CB_{\e}({\bf p},\ov {\bf p})\right\rangle\left\langle \CB_{\e}({\bf p}, \ov{\bf p}), {\bf p}\right\rangle&=\sum^{k-2l}_{m=0}\e^m\hspace{-0.45 cm}\sum_{n_1+n_2=m}\hspace{-0.45 cm}\langle {\bf p}, B_{l+n_1}({\bf p}, \ov{\bf p})\rangle\langle B_{l+n_1}({\bf p}, \ov{\bf p}), {\bf p}\rangle+\CO(\e^{k-2l+1}),\\
\left\langle {\bf p}, \CB_{\e}({\bf p}, {\bf p})\right\rangle\left\langle  \CB_{\e}({\bf p}, {\bf p}),{\bf p}\right\rangle&=\sum^{k-2l}_{m=0}\e^m\hspace{-0.45 cm}\sum_{n_1+n_2=m}\hspace{-0.45 cm}\langle {\bf p}, B_{l+n_1}(\ov {\bf p}, \ov{\bf p})\rangle\langle B_{l+n_1}(\ov{\bf p}, \ov{\bf p}), {\bf p}\rangle+\CO(\e^{k-2l+1}),
\end{align*}
we have that
\begin{align}\label{K2}
K^2_{\e}=\sum_{m=0}^{k-2l} \e^m(\widetilde L_{m}-L_m).
\end{align}

Substituting \eqref{K1} and \eqref{K2} into \eqref{ellK}, we get
\[
\ell_1=\dfrac{\e^l}{2}\left(\sum_{m=0}^{k-l}\e^m \textrm{Re}(L_m)+\sum_{m=l}^{k-l}\e^m  \textrm{Re}(\widetilde L_{m-l}-L_{m-l})\right)+\CO(\e^{k+1}).
\]
From here, we split the analysis in two cases, namely $k-l<l$ and $l\leq k-l.$

If $k-l<l,$ we have
\[
\ell_1=\sum_{m=0}^{k-l}\e^{m+l} \dfrac{\textrm{Re}(L_m)}{2}+\CO(\e^{k+1}).
\]
Thus, in this case, $\ell_{1,l+m}=\frac{1}{2}\textrm{Re}(L_m),$ for $m=0,1,\ldots,k-l<l.$

Otherwise, if $l\leq k-l,$ we have
\[
\ell_1=\sum_{m=0}^{l-1}\e^{m+l} \dfrac{\textrm{Re}(L_m)}{2}+\sum_{m=l}^{k-l}\e^{m+l}  \textrm{Re}(L_m+\widetilde L_{m-l}-L_{m-l})+\CO(\e^{k+1}).
\]
Thus, in this case,
\begin{align*}
\ell_{1,l+m}=\left\{
	\begin{array}{ll}
		\dfrac{1}{2}\textrm{Re}\left( L_{m}\right) & \mbox{for } \, 0\leq m < l ,\\
		& \\
		\dfrac{1}{2}\textrm{Re}\left(L_m+\widetilde L_{m-l}-L_{m-l}\right) & \mbox{for } \, l\leq m \leq k-l.
	\end{array}
\right.
\end{align*}

Hence, we have concluded this proof.
\end{proof}

\subsection{Hopf bifurcation in the $l$th averaged system}\label{HPl}  Let $l,$ $1\leq l<k,$ be the subindex of the first non-vanishing averaging function. As set in hypothesis {\bf B1}, $(\bx_{\mu_0};\mu_0)$ is a Hopf point of the averaged system of order $l$ \eqref{tasl}. It is easy to see that its first Lyapunov coefficient at  $(\bx_{\mu_0};\mu_0)$ is given by $\e^l \,\ell_{1,l}.$ In addition, from Proposition \ref{propform}, we compute
\[
\begin{array}{rl}
\ell_{1,l}&=\dfrac{1}{8}\left(\dfrac{\partial ^3\bg_l^1(\bx_{\mu_0};\mu_0)}{\partial x^3}+\dfrac{\partial ^3\bg_l^1(\bx_{\mu_0};\mu_0)}{\partial x\partial y^2}+\dfrac{\partial ^3\bg_l^2(\bx_{\mu_0};\mu_0)}{\partial x^2\partial y}+\dfrac{\partial ^3\bg_l^2(\bx_{\mu_0};\mu_0)}{\partial y^3}\right)\\
&+\dfrac{1}{8\omega_0}\left(\dfrac{\partial ^2\bg_l^1(\bx_{\mu_0};\mu_0)}{\partial x\partial y}\Big(\dfrac{\partial ^2\bg_l^1(\bx_{\mu_0};\mu_0)}{\partial x^2}+\dfrac{\partial ^2\bg_l^1(\bx_{\mu_0};\mu_0)}{\partial y^2}\Big)-\dfrac{\partial ^2\bg_l^2(\bx_{\mu_0};\mu_0)}{\partial x\partial y}\right.\\
&\Big(\dfrac{\partial ^2\bg_l^2(\bx_{\mu_0};\mu_0)}{\partial x^2}+\dfrac{\partial ^2\bg_l^2(\bx_{\mu_0};\mu_0)}{\partial y^2}\Big)-\dfrac{\partial ^2\bg_l^1(\bx_{\mu_0};\mu_0)}{\partial x^2}\dfrac{\partial ^2\bg_l^2(\bx_{\mu_0};\mu_0)}{\partial x^2}\\
&\left.+\dfrac{\partial ^2\bg_l^1(\bx_{\mu_0};\mu_0)}{\partial y^2}\dfrac{\partial ^2\bg_l^2(\bx_{\mu_0};\mu_0)}{\partial y^2}\right).
\end{array}
\]
We notice that a Hopf bifurcation in the averaged system \eqref{tasl} is characterized by hypotheses {\bf B1}, {\bf B2}, and $\ell_{1,l}\neq0.$ Hypothesis {\bf B3} is just a technical assumption with no loss of generality. Therefore, from Theorem \ref{teo2}, a Hopf bifurcation in the averaged system of order $l$ \eqref{tasl} implies in a Torus bifurcation in the differential equation \eqref{s1}.

\section{Invariant torus in a 3D vector field}\label{Ex}
In this section, as an example of application of the developed theory, we show how to use Theorems \ref{teo1} and \ref{teo2} for detecting an invariant torus in the following  family of 3D vector fields,
\begin{equation}\label{exvf}
\begin{array}{rl}
x'=&-y+\e P_1(x,y,z;\mu)+\e^2 P_2(x,y,z;\mu)+\CO(\e^3),\vspace{0.1cm}\\
y'=&x+\e^2 Q(x,y,z;\mu)+\CO(\e^3),\vspace{0.1cm}\\
z'=&\e R_1(x,y,z;\mu)+\e^2 R_2(x,y,z;\mu)+\CO(\e^3),
\end{array}
\end{equation}
where 
\[
\begin{array}{rl}
P_1(x,y,z;\mu)=&10 x (3\mu+a\,(9+4x^2+3z^2))-30x\rho(x,y)\Big(z+\mu+a\,(1+4x^2+z^2)\Big),\vspace{0.3cm}\\
P_2(x,y,z;\mu)=&10b\,x(9+4x^2+3z^2)-30b\,x\rho(x,y)\Big(1+4x^2+z^2\Big),\vspace{0.3cm}\\
Q(x,y,z;\mu)=&-30 \pi  y(15(\mu ^2-1)+20 a\,(\mu (4 y^2+3 z^2+9)+3 z))\vspace{0.2cm}\\
&+3 a^2(24 y^4+40 y^2(z^2+5)+15(z^4+6 z^2+5))\vspace{0.2cm}\\
&+150 \pi  y \rho(x,y)\Big(3 \mu ^2+6 \mu  (2 a\, (4 y^2+z^2+1)+z)\vspace{0.2cm}\\
&+a\, (3 a\, (24 y^4+8 y^2 (3 z^2+5)+3 (z^2+1)^2)+8 y^2 z+6 z^3+6 z)-3\Big),\vspace{0.3cm}\\
\end{array}
\]
\[
\begin{array}{rl}
R_1(x,y,x;\mu)=&30 x^2 (1-2 a\, z) \rho (x,y)+15 (a\, (2 x^2 z+z^3+z)+\mu  z-1),\vspace{0.3cm}\\
R_2(x,y,z;\mu)=&-225 \pi  a^2 z (8 y^4+12 y^2 (z^2+3)+3 (z^2+1)^2)\vspace{0.2cm}\\
&-450 \pi  a\, (y^2 (4 \mu  z-6)+(z^2+1) (2 \mu  z-1))+15 b\, (2 x^2 z+z^3+z)\vspace{0.2cm}\\
&-225 \pi  ((\mu ^2-1) z-2 \mu )+60\rho(x,y) \Big(5 \pi  y^2 (a\, ((6 a\, z-1) (4 y^2+3 z^2)\vspace{0.2cm}\\
&+18 a\, z+12 \mu  z-9)-3 \mu )-b\, x^2 z\Big),
\end{array}
\]
and 
\[
\rho(x,r)=\dfrac{1}{\sqrt{x^2+y^2}}.
\]

\begin{proposition}\label{p1}
For $\e>0$ and $|\mu|$ sufficiently small the vector field \eqref{exvf} admits a unique limit cycle $\f(t;\mu,\e)=(x(t;\mu,\e),y(t;\mu,\e),z(t;\mu,\e))$  satisfying $x(t;\mu,0)^2+y(t;\mu,0)^2=1$ and $z(t;\mu,0)=0,$ for every $t\in\R.$ Assume that $a^2+b^2\neq0.$ Then,  there exists a smooth curve $\mu(\e),$ defined for $\e>0$ sufficiently small and satisfying $\mu(\e)= -\e \pi/2+\CO(\e^2),$ and neighborhoods $\mathcal{V}_{\e}\subset \R^3$ of the limit cycle $\f(t;\mu(\e),\e)$ and $J_{\e}$ of $\mu(\e)$ for which the following statement holds.
\begin{itemize}
 \item[(i)] If $a\neq0,$ then a unique invariant torus of the vector field \eqref{exvf} bifurcates in $\mathcal{V}_{\e}$ from the limit cycle $\f(t;\mu(\e),\e),$ as $\mu$ passes through $\mu(\e).$ Such a torus exists whenever $\mu\in J_{\e}$ and $a(\mu-\mu(\e))<0$ and surrounds the limit cycle $\f(t;\mu,\e).$ In addition,  if $a>0$ (resp. $a<0$) the torus is unstable (resp. stable), whereas the limit cycle $\f(t;\mu,\e)$ is asymptotically stable (resp. unstable) (see Figure \ref{fig1}).

\smallskip

 \item[(ii)]  If $a=0$ and $b\neq0,$ then a unique invariant torus of the vector field \eqref{exvf}  bifurcates in $\mathcal{V}_{\e}$ from the limit cycle $\f(t;\mu(\e),\e),$ as $\mu$ passes through $\mu(\e).$ Such a torus exists whenever $\mu\in J_{\e}$ and $b(\mu-\mu(\e))<0$ and surrounds the limit cycle $\f(t;\mu,\e).$ In addition,  if $b>0$ (resp. $b<0$) the torus is unstable (resp. stable), whereas the limit cycle $\f(t;\mu,\e)$ is stable (resp. unstable) (see Figure \ref{fig2}).
 \end{itemize}
\end{proposition}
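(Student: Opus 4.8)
The plan is to reduce the three-dimensional system \eqref{exvf} to a planar nonautonomous $2\pi$-periodic differential equation of the form \eqref{s1} with $k=2$, and then to verify for the reduced equation the hypotheses of Theorem \ref{teo1} when $a\neq0$, and those of Theorem \ref{teo2} (with $l=1$ and $j^{*}=2$) when $a=0$, $b\neq0$. The reduction proceeds through polar-like coordinates: writing $x=r\cos\theta$ and $y=r\sin\theta$, one obtains $\dot r=\e\,xP_1/r+\CO(\e^{2})$, $\dot\theta=1-\e\,yP_1/r^{2}+\CO(\e^{2})$, and $\dot z=\e R_1+\CO(\e^{2})$. Since $\dot\theta=1+\CO(\e)$, near $r=1$ the angle $\theta$ can be used as the new independent variable; dividing $\dot r$ and $\dot z$ by $\dot\theta$ and expanding in $\e$ up to order two yields
\begin{equation*}
\frac{dr}{d\theta}=\e\,\bF_1^{r}+\e^{2}\,\bF_2^{r}+\CO(\e^{3}),\qquad \frac{dz}{d\theta}=\e\,\bF_1^{z}+\e^{2}\,\bF_2^{z}+\CO(\e^{3}),
\end{equation*}
whose right-hand sides are $\C^{1}$ and $2\pi$-periodic in $\theta$ (the factor $\rho(x,y)=1/r$ carries no dependence on $\theta$). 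This is precisely \eqref{s1} with $\bx=(r,z)$, $T=2\pi$, and $k=2$.

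Next I would compute the first averaged function $\bg_1(r,z;\mu)=\int_0^{2\pi}\bF_1(\theta,r,z;\mu)\,d\theta$ and check that $\bg_1(1,0;\mu)=(0,0)$ for every $\mu$ near $0$, so that $\bx_\mu\equiv(1,0)$, and that $D_{(r,z)}\bg_1(1,0;\mu)$ has eigenvalues $\al(\mu)\pm i\beta(\mu)$ with $\al(0)=0$, $\beta(0)=\omega_0>0$, and $\al'(0)\neq0$ (so $\mu_0=0$). This gives hypotheses {\bf B1} and {\bf B2} with $l=1$; a linear change of coordinates in the plane puts $D\bg_1$ into real Jordan form, which is {\bf B3}. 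Since $\det D_{(r,z)}\bg_1(1,0;0)=\omega_0^{2}\neq0$, the zero $(1,0)$ of $\bg_1$ is simple, so Lemma \ref{l1higher} produces the limit cycle $\f(t;\mu,\e)$ with $x^{2}+y^{2}=1$ and $z=0$ at $\e=0$.

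It then remains to compute the Lyapunov coefficients. Applying the explicit formula \eqref{l1TA} to the second- and third-order partials of $\bg_1$ at $(1,0;0)$, I expect to find that $\ell_{1,1}$ is a positive multiple of $a$. If $a\neq0$, then $\ell_{1,1}\neq0$ and Theorem \ref{teo1} applies directly, yielding a unique invariant torus bifurcating from $\f(t;\mu(\e),\e)$, its existence for $a(\mu-\mu(\e))<0$, and the stability statement of item $(i)$. If $a=0$, then $\ell_{1,1}=0$, and one must go one order further: compute the second averaged function $\bg_2$, the multilinear data $\CA_{2},\CB_{2},\CC_{2}$ of \eqref{TeABC}, and then $\ell_{1,2}$ via Proposition \ref{propform}, which I expect to be a positive multiple of $b$; since the hypothesis $a^{2}+b^{2}\neq0$ forces $b\neq0$ here, we have $j^{*}=2$ and Theorem \ref{teo2} gives item $(ii)$. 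The curve $\mu(\e)$ is the one produced by Lemma \ref{l2higher}: expanding $|\lambda(\mu,\e)|^{2}=1+2\e\,\al(\mu)+\CO(\e^{2})$ and solving $|\lambda(\mu(\e),\e)|=1$ gives $\mu(\e)=\mu_0+\CO(\e)$, and substituting the data computed above yields $\mu(\e)=-\e\pi/2+\CO(\e^{2})$ (the $\pi$ coming from the period $2\pi$ in the averaging integrals). Finally, an invariant closed curve of the Poincar\'{e} map of the reduced planar equation pulls back, under $(\theta,r,z)\mapsto(x,y,z)$, to an invariant torus of \eqref{exvf} surrounding $\f(t;\mu,\e)$, with the stabilities transported accordingly; this provides the neighbourhoods $\mathcal{V}_{\e}\subset\R^{3}$ and $J_{\e}$.

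The main obstacle is entirely computational: evaluating the trigonometric integrals defining $\bg_1$ and $\bg_2$, taking the second and third derivatives that feed into \eqref{l1TA} and Proposition \ref{propform}, and carrying out the $\e$-expansion of $\mu(\e)$ are all routine but lengthy, and are best handled with a computer algebra system. The functions $P_1,P_2,Q,R_1,R_2$ appear to have been engineered exactly so that these computations collapse to $\ell_{1,1}\propto a$, $\ell_{1,2}\propto b$, and $\mu(\e)=-\e\pi/2+\CO(\e^{2})$, so the genuine content of the argument is the \emph{verification}, rather than the discovery, of these identities.
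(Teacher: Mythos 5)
Your proposal follows essentially the same route as the paper: reduction to a planar $2\pi$-periodic system via cylindrical coordinates with $\theta$ as independent variable, verification of {\bf B1}--{\bf B3} at $\bx_\mu=(1,0)$, $\mu_0=0$, and computation of the Lyapunov coefficients (the paper finds $\ell_{1,1}=4a$ and $\ell_{1,2}=4b$, confirming your anticipated proportionality to $a$ and $b$, and likewise $\mu(\e)=-\e\pi/2+\CO(\e^2)$), with Theorem~\ref{teo2} then yielding both items. The only substantive content you defer --- the explicit averaging integrals and coefficient evaluations --- is exactly what the paper also simply reports as the outcome of a symbolic computation, so the argument is correct and matches the paper's.
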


\begin{proof}
Writing the vector field \eqref{exvf} in cylindrical coordinates $(x,y,z)=(r\cos\T,r \sin\T, z)$ and taking $\T$ as the new independent variable we get the following nonautonomous differential equation
\begin{equation}\label{excil}
\begin{array}{l}
\dfrac{\p r}{\p \T}=\e\cos\T\widetilde P_1(\T,r,z)+\e^2\Big(\dfrac{1}{r} \cos\T\sin\T\widetilde P_1(\T,r,z)^2+ \cos\T\widetilde P_2(\T,r,z)+\sin\T\widetilde Q(\T,r,z)\Big),\vspace{0.3cm}\\
\dfrac{\p z}{\p\T}=\e\widetilde R_1(\T,r,z)+\e^2\Big(\dfrac{1}{r}\sin\T\widetilde P_1(\T,r,z)\widetilde R_1(\T,r,z)+ \widetilde R_2(\T,r,z)\Big),
\end{array}
\end{equation}
where $\widetilde P_i(\T,r,z)=P_i(r\cos\T,r\sin\T,z)$ and $\widetilde R_i(\T,r,z)=R_i(r\cos\T,r\sin\T,z),$ for $i=1,2,$ and $\widetilde Q(\T,r,z)=Q(r\cos\T,r\sin\T,z).$ Computing the first and second averaging functions of  \eqref{excil} we get
\[
\begin{array}{rl}
\bg_1(r,z)=&30\pi\Big((r-1)\mu-z+a\,(r-1)((r-1)^2+z^2)\,,\,r-1+\mu z\\
&+a\,z((r-1)^2+z^2)\Big),\vspace{0.3cm}\\
\bg_2(r,z)=&30\pi b\,\Big((r-1)((r-1)^2+z^2)\,,\,z((r-1)^2+z^2)\Big).
\end{array}
\]
We notice that $\bg_1$ satisfies hypothesis {\bf B1} for $\bx_{\mu}=(1,0)$ and $\mu_0=0.$

Following the method described in the previous section we take $ \by= \bx+\bxi(\mu,\e)$ and $\mu=\sigma+\mu(\e).$ It is easy to see that $\bxi(\mu, \e)=(1,0)+\CO(\e^2)$  and $\mu(\e)= -\e \pi/2+\CO(\e^2).$ Thus, the transformed Poincar\'{e} map \eqref{sish2} for the differential equation  \eqref{excil} writes 
$$
 {\bf H}_\e(\by,0)= A_\e\by+\dfrac{1}{2}B_\e(\by,\by)+\dfrac{1}{6} C_\e(\by,\by,\by)+\CO(|\by|^4),
$$
with $A_\e=Id+\e^1 A_1+\e^{2} A_{2}+\CO(\e^{3}),$ $B_\e({\bf u},{\bf v})=\e^1 B_1({\bf u},{\bf v})+\e^{2} B_{2}({\bf u},{\bf v})+\CO(\e^{3}),$ and $C_\e({\bf u},{\bf v},{\bf w})=\e^1 C_1({\bf u},{\bf v},{\bf w})+\e^{2} C_{2}({\bf u},{\bf v},{\bf w})+\CO(\e^{3}),$ where  
$$A_1=\begin{pmatrix}
0 & -1\\
1 & 0
\end{pmatrix}, \,
A_2=\begin{pmatrix}
-\dfrac{1}{2} & 0\\
0 & -\dfrac{1}{2}
\end{pmatrix},B_1({\bf u},{\bf v})=B_2({\bf u},{\bf v})=(0,0),$$
  and 
  \[
  \begin{array}{rl}
  C_{i}({\bf u},{\bf v},{\bf w})=& 2\,\ell_1^i\Big(3 u_1 v_1 w_1 + u_2 v_2 w_1 + u_2 v_1 w_2 + u_1 v_2 w_2, \\
  &u_2 v_1 w_1 + u_1 v_2 w_1 + u_1 v_1 w_2 + 3 u_2 v_2 w_2 \Big),
  \end{array}
  \]
   for $i=1,2.$ 

Notice that  $Id+\e A_1+\e^2 A_2$ satisfies hypotheses {\bf B3}. Thus, we can define $e^{i \theta_{\e}}=1+\e \,i -\e^2\,\dfrac{1}{2}+\CO(\e^3).$  From Proposition \eqref{propform} and \eqref{a0e} we get
$
  \ell_1=\e \, 4\,a+\e^24\,b+\CO(\e^3).
$ 
In this case, following the notation \eqref{Tel1}, $\ell_{1,1}=4a\,$ and $\ell_{1,2}=4 b\,.$
Hence, applying Theorem \ref{teo2} we conclude this proof.
\end{proof}

\begin{figure}[H]
	\subfigure[]
	{\begin{overpic}[width=6cm]{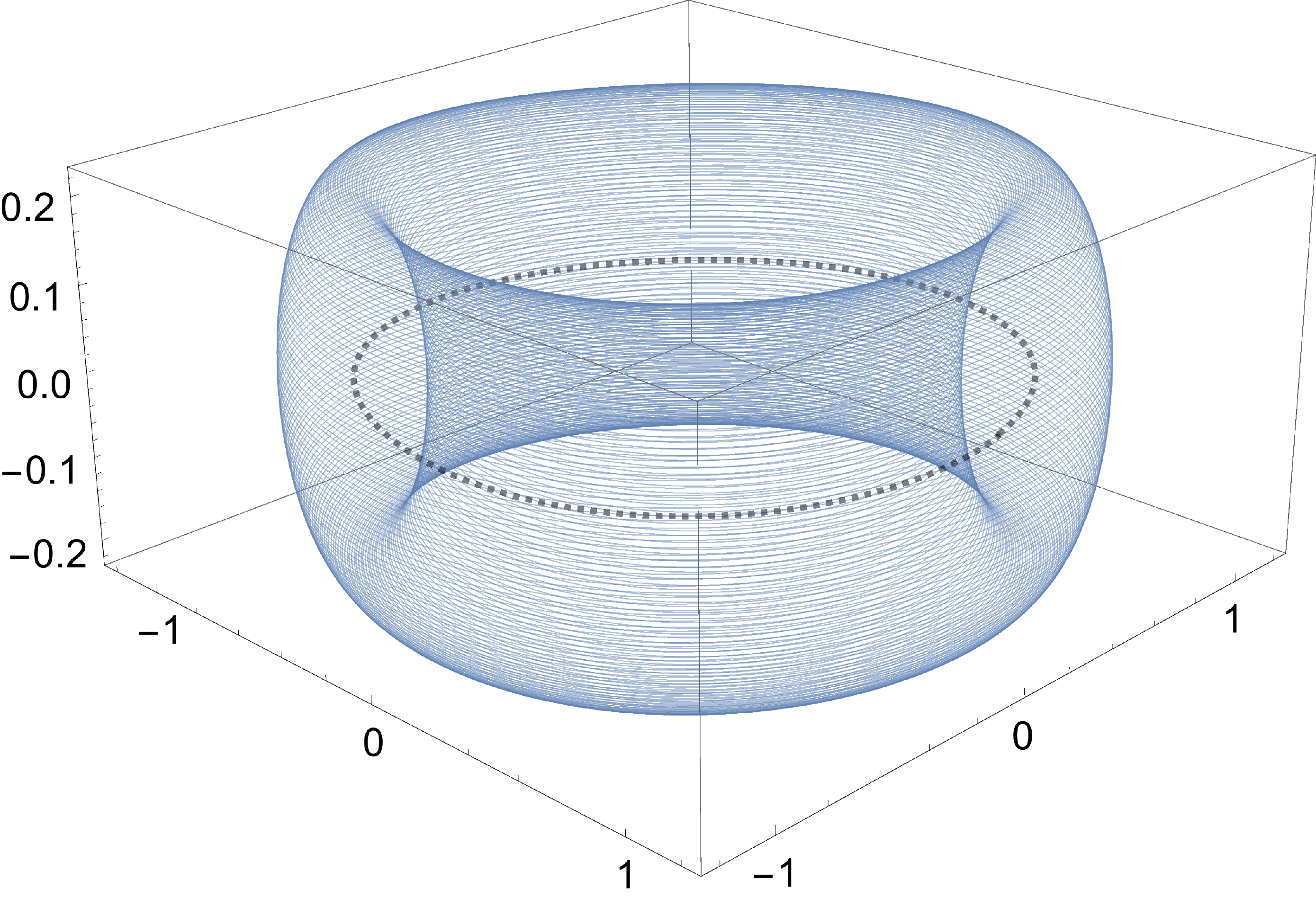}
			\put(26,7){$x$}
			\put(79,10){${y}$}
			\put(-3,37.5){${z}$}
	\end{overpic}}
	\subfigure[]
	{\begin{overpic}[width=6cm]{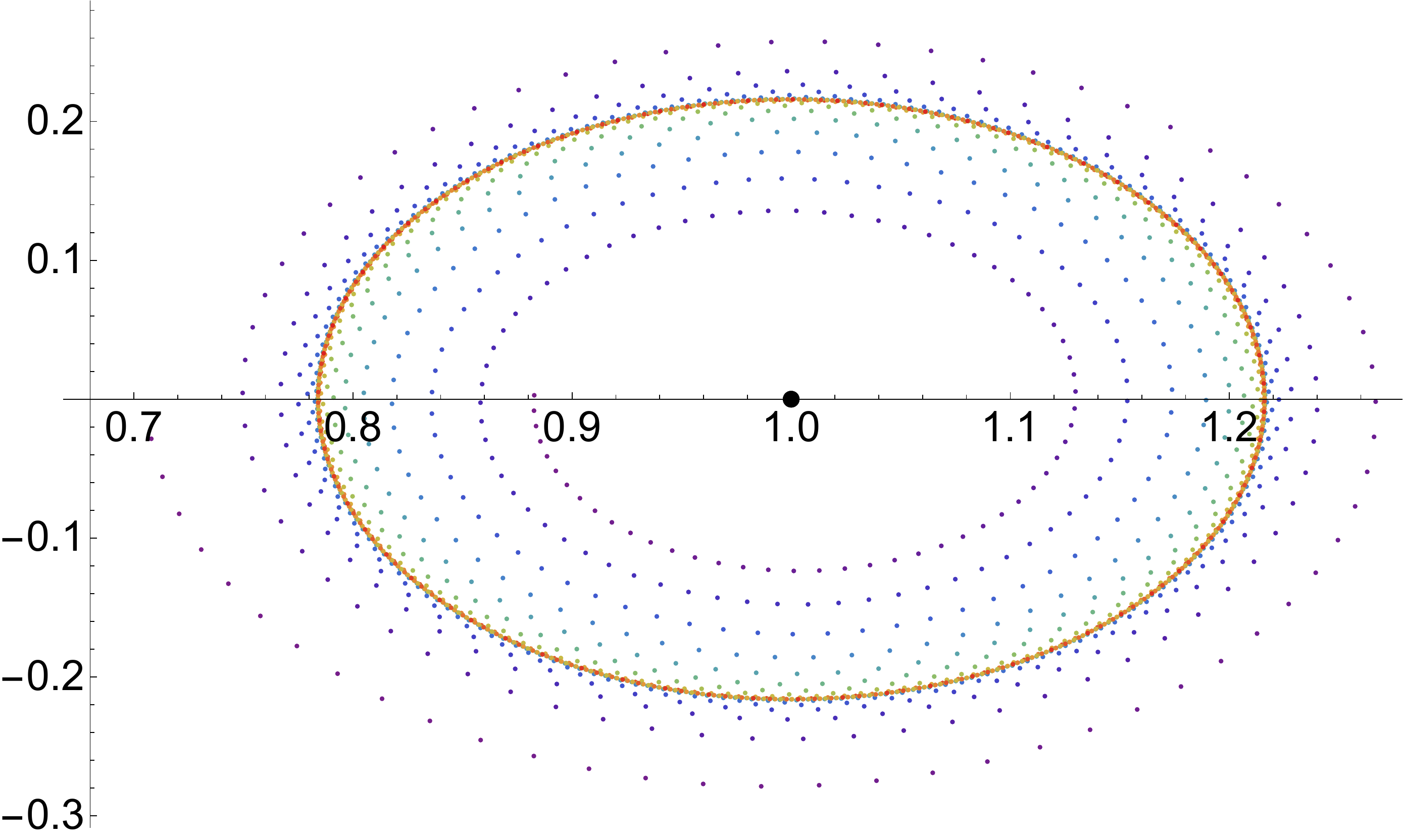}
				\put(5,62){$z$}
				\put(100,30){$x$}
	\end{overpic}}
	\caption{Invariant torus of the vector field \eqref{exvf} predicted by Proposition \ref{p1} (i) assuming $a\,=-1,$ $ b\,=0,$ $\mu=0,$ and $\e=10^{-3}.$ Figure (a) shows a trajectory starting at $(0.98, 0.21, 0),$ for $t\in [50000,51250].$ The dashed line corresponds to the limit cycle. Figure (b) depicts the Poincar\'{e} section $y=0, x>0$ showing the stable invariant torus as a stable invariant closed curve.}
	\label{fig1}	
\end{figure}

\begin{figure}[H]
	\subfigure[]
	{\begin{overpic}[width=6cm]{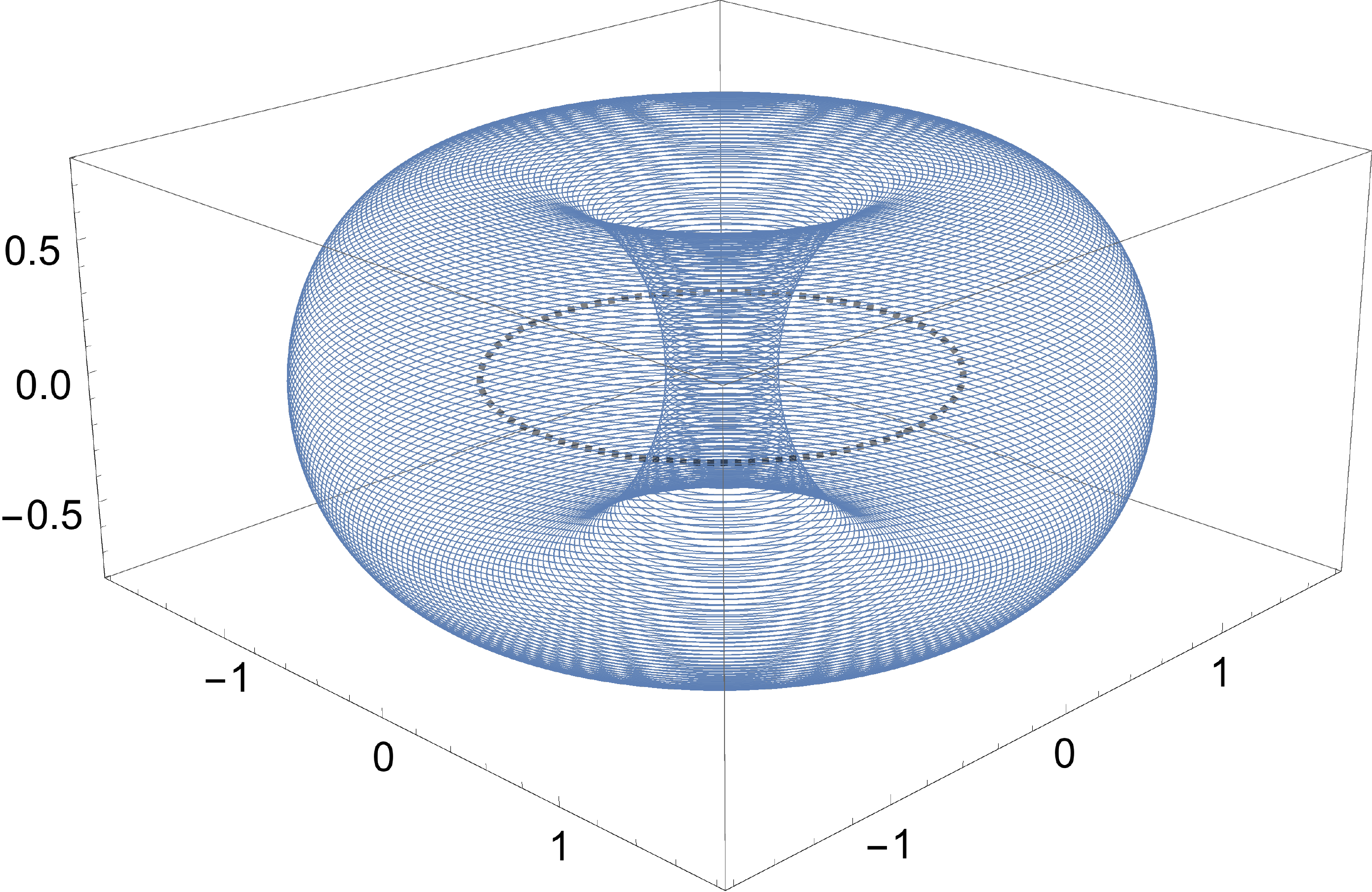}
			\put(25,5){$x$}
			\put(79,8){${y}$}
			\put(-3,36){${z}$}
	\end{overpic}}
	\subfigure[]
	{\begin{overpic}[width=6cm]{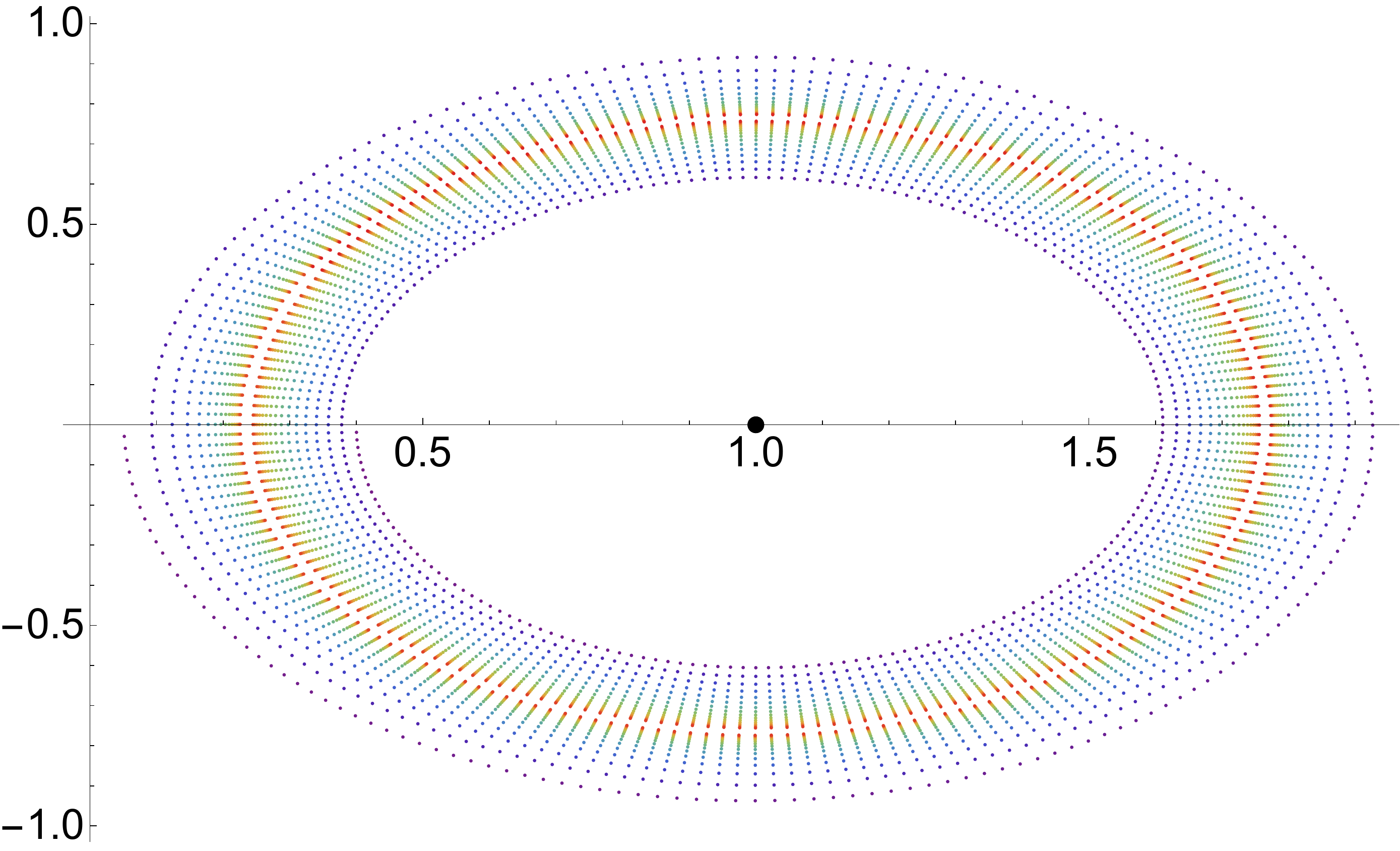}
				\put(5,62){$z$}
				\put(102,28){$x$}
	\end{overpic}}
	
	\caption{Invariant torus of the vector field \eqref{exvf} predicted by Proposition \ref{p1} (ii) assuming $a\,=0,$ $ b\,=-80,$ $\mu=0,$ and $\e=10^{-3}/3.$ The dashed line corresponds to the limit cycle. Figure (a) shows a trajectory starting at $(0.385, 0, 0.386),$ for $t\in [57540,59000].$ Figure (b) depicts the Poincar\'{e} section $y=0, x>0$ showing the stable invariant torus as a stable invariant closed curve.}
	\label{fig2}	
\end{figure}

\section*{Appendix: Higher order averaged functions}

The averaging theory is one of the most classical analytical methods to study isolated periodic solutions of differential equations in the presence of a small parameter. Usually, this theory deals with differential equations in the following standard form
\begin{equation}\label{ss1}
\dot\bx(t)=\sum_{i=1}^k\e^i \bF_i(t,\bx)+\e^{k+1} \widetilde{\bF}(t,\bx,\e),
\end{equation}
where $\bF_i$ and $\widetilde\bF$ are  sufficiently smooth functions, $T$-periodic in the variable $t,$  $\bx\in\Omega$ with $\Omega$ an open bounded subset of $\mathbb{R}^2,$ $t \in \R,$ and $\e \in
(-\e_0,\e_0)$ for some $\e_0>0$ small. In \cite{LNT,novaes} it has been established that the {\it $i$-th order averaged function} of \eqref{ss1} is given by
\begin{equation*}\label{f}
\bg_i(\bx)=\dfrac{y_i(T,\bx)}{i!},
\end{equation*}
where $y_i:\R\times D\rightarrow \R^n,$ for $i=1,2,\ldots,k,$ are
defined recurrently as
\begin{equation*}\label{ynew}
\begin{array}{rl}
y_1(t,\bx)=&\displaystyle\int_0^tF_1\left(s,\bx\right) \,ds,\vspace{0.3cm}\\
y_i(t,\bx)=&\displaystyle \int_0^t\Bigg(i! F_i\left(s,\bx\right)\vspace{0.2cm}\\
&\displaystyle+\sum_{l=1}^{i-1}\sum_{m=1}^l\dfrac{i!}{l!}\p^m F_{i-l} \left(s,\bx\right) \mathbb{B}_{l,m}\big(y_1(s,\bx),\ldots,y_{l-m+1}(s,\bx)\big)\Bigg)ds.
\end{array}
\end{equation*}
Here, $\p^LF(t,\bx)$ denotes the Frechet's derivative with respect to the variable $\bx,$ which is a $L$-multilinear map applied to a ``product'' of
$L$ vectors of $\R^n,$ $\bigodot_{j=1}^Ly_j\in
\R^{nL},$ where   $y_j= (y_{j1},\ldots, y_{jn})\in \R^n.$ Formally,
\begin{equation*}\label{p}
\p^L F(t,\bx)\bigodot_{j=1}^Ly_j= \sum_{i_1,\ldots,i_L=1}^n \dfrac{\p^L
F(t,\bx)}{\p x_{i_1}\cdots \p x_{i_L}}y_{1i_1}\cdots y_{Li_L}.
\end{equation*}
Also, for $p$ and $q$ positive integers, $\mathbb{B}_{p,q}$ denotes the partial Bell polynomials,
\[
\mathbb{B}_{p,q}(x_1,\ldots,x_{p-q+1})=\sum_{\widetilde S_{p,q}}\dfrac{p!}{b_1!\,b_2!\cdots b_{p-q+1}!}\prod_{j=1}^{p-q+1}\left(\dfrac{x_j}{j!}\right)^{b_j},
\]
where now $\widetilde{S}_{p,q}$ is the set of all $(p-q+1)$-tuple of nonnegative integers $(b_1,b_2,\cdots,b_{p-q+1})$ satisfying $b_1+2b_2+\cdots+(p-q+1)b_{p-q+1}=p,$ and
$b_1+b_2+\cdots+b_{p-q+1}=q.$ 

The next results were proved in \cite{LNT}. Nonsmooth versions of these results can be found in \cite{llinovrod}.

\begin{lemma}[\cite{LNT}]\label{lap}
Let $\f(\cdot,\bx,\e):[0,t_{\e}]\rightarrow\R^n$ be the solution of
\eqref{s1} with $\f(0,\bx,\e)=\bx.$ Then, for $|\e|$ sufficiently small, $t_{\e}>T$ and
\[
\f(t,\bx,\e)=\bx+\sum_{i=1}^{k}\e^{i}\dfrac{y_i(t,\bx)}{i!}+\CO(\e^{k+1}).
\]
\end{lemma}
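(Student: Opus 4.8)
The plan is to prove the statement in three stages: existence of the solution past time $T$ for small $|\e|$; the existence of \emph{some} Taylor expansion in $\e$ by smooth dependence on parameters; and finally the identification of the coefficients of that expansion with $y_i/i!$ by substituting it into the integral form of \eqref{s1} and matching powers of $\e$ through Fa\`a di Bruno's formula.

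First, for the existence part: at $\e=0$ equation \eqref{s1} becomes $\dot\bx=0$, so $\f(t,\bx,0)\equiv\bx$, which lies in $\Omega$ for all $t$. Since $\bx$ ranges over a compact subset of the open set $\Omega$, continuous dependence of solutions on the parameter $\e$ yields $\e_0>0$ such that, for $|\e|<\e_0$, the solution $\f(t,\bx,\e)$ stays in a fixed compact subset of $\Omega$ throughout $t\in[0,T]$; hence its maximal interval of definition satisfies $t_\e>T$.

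Next, since $\bF_i$ and $\widetilde\bF$ are sufficiently smooth (say $\CC^{k+1}$), the theorem on smooth dependence on parameters gives that $\e\mapsto\f(t,\bx,\e)$ is $\CC^{k+1}$ uniformly for $t\in[0,T]$, so Taylor's theorem provides
\[
\f(t,\bx,\e)=\bx+\sum_{i=1}^{k}\e^{i}u_i(t,\bx)+\CO(\e^{k+1})
\]
with remainder uniform in $t\in[0,T]$ (the $\e^0$ term equals $\bx$ because $\f(t,\bx,0)=\bx$). Substituting this into the integral form $\f(t,\bx,\e)=\bx+\int_0^t\big(\sum_{n=1}^k\e^n\bF_n(s,\f(s,\bx,\e))+\e^{k+1}\widetilde\bF(s,\f(s,\bx,\e),\e)\big)ds$ and expanding each $\bF_n(s,\cdot)$ around $\bx$, Fa\`a di Bruno's formula shows that the coefficient of $\e^p$ ($p\ge1$) in $\bF_n(s,\f(s,\bx,\e))$ is $\frac{1}{p!}\sum_{m=1}^{p}\p^m\bF_n(s,\bx)\,\mathbb{B}_{p,m}\big(1!\,u_1,\dots,(p-m+1)!\,u_{p-m+1}\big)$, the $\e^0$ coefficient being $\bF_n(s,\bx)$. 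Collecting the coefficient of $\e^i$ on both sides and relabelling $l=i-n$ gives
\[
u_i(t,\bx)=\int_0^t\!\Big(\bF_i(s,\bx)+\sum_{l=1}^{i-1}\sum_{m=1}^{l}\frac{1}{l!}\,\p^m\bF_{i-l}(s,\bx)\,\mathbb{B}_{l,m}\big(1!\,u_1,\dots,(l-m+1)!\,u_{l-m+1}\big)\Big)ds,
\]
so, arguing by induction on $i$ and using the identity $j!\,u_j=y_j$, the right-hand side is $\frac{1}{i!}$ times the recursion defining $y_i$; hence $u_i=y_i/i!$, as claimed.

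The main obstacle is the middle combinatorial step: writing the higher-order chain rule for the composition $\bF_n(s,\f(s,\bx,\e))$ in the vector-valued setting — with $\p^m\bF_n$ a symmetric $m$-linear map acting on symmetric products of the expansion terms — and recognizing that the coefficients produced are precisely the partial Bell polynomials $\mathbb{B}_{l,m}$ occurring in the Appendix recursion, the homogeneity bookkeeping $y_j=j!\,u_j$ being exactly what makes the match work. Everything else reduces to routine facts from the basic theory of ODEs.
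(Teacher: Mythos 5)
The paper does not actually prove this lemma — it is imported from \cite{LNT} with the remark ``the next results were proved in \cite{LNT}'' — so there is no internal proof to compare against; your argument is correct and is essentially the standard proof from that reference: local existence plus continuous dependence give $t_\e>T$, smooth dependence on parameters gives the Taylor expansion in $\e$ with uniform remainder, and the vector-valued Fa\`a di Bruno formula applied to the integral equation identifies the coefficients, the normalization $y_j=j!\,u_j$ being precisely what converts the Bell-polynomial arguments $h^{(j)}(0)=j!\,u_j$ into the $y_j$'s appearing in the Appendix recursion. The induction on $i$ closes correctly since the base case $u_1=\int_0^t\bF_1(s,\bx)\,ds=y_1$ holds and the recursions coincide term by term.
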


\begin{theorem}[\cite{LNT}]\label{tap}
Assume that, for some $l\in \{1,2,\ldots, k\},$ $\bg_i= 0$ for $i=1,2,\ldots,l-1,$ and
$\bg_l\neq 0.$
Then, for each $a^*\in D$ such that $\bg_l(a^*)=0$ and $\det(df_l(a^*))\neq0,$ there exists, for $|\e|>0$ sufficiently small, a $T$-periodic solution $\f(\cdot,\e)$ of \eqref{ss1} such that $\f(0,\e)\to a^*$ when
$\e\to 0.$
\end{theorem}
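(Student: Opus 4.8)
The plan is to reduce the existence of a $T$-periodic solution to finding a zero of a displacement map and then to apply the Implicit Function Theorem, using Lemma \ref{lap} to control the Taylor expansion of that map. Since \eqref{ss1} is $T$-periodic in $t$, the solution $\f(\cdot,\bx,\e)$ with $\f(0,\bx,\e)=\bx$ is $T$-periodic if and only if it is a fixed point of the time-$T$ map, i.e. if and only if $\bx$ is a zero of the displacement function $d(\bx,\e):=\f(T,\bx,\e)-\bx$. By Lemma \ref{lap}, for $|\e|$ small,
\[
d(\bx,\e)=\sum_{i=1}^{k}\e^{i}\frac{y_i(T,\bx)}{i!}+\CO(\e^{k+1})=\sum_{i=1}^{k}\e^{i}\bg_i(\bx)+\CO(\e^{k+1}).
\]

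Next I would exploit the hypothesis $\bg_1\equiv\cdots\equiv\bg_{l-1}\equiv 0$ on $D$ — identically, not merely at $a^*$ — to factor out $\e^{l}$, writing
\[
d(\bx,\e)=\e^{l}\Big(\bg_l(\bx)+\e\,\bg_{l+1}(\bx)+\cdots+\e^{k-l}\bg_k(\bx)+\e^{k-l+1}\widetilde G(\bx,\e)\Big)=:\e^{l}\,\Psi(\bx,\e),
\]
so that $\Psi(\bx,0)=\bg_l(\bx)$. Here one invokes the standard smooth dependence of solutions of ODEs on initial conditions and on the parameter $\e$, together with Taylor's theorem with remainder, to conclude that $\Psi$ extends continuously across $\e=0$ and is $C^1$ in $\bx$ uniformly for $\e$ near $0$.

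Then I would apply the Implicit Function Theorem to $\Psi$ at $(a^*,0)$: indeed $\Psi(a^*,0)=\bg_l(a^*)=0$ and $D_\bx\Psi(a^*,0)=D\bg_l(a^*)$, which is invertible by the hypothesis $\det D\bg_l(a^*)\neq 0$. This produces $\e_*>0$ and a continuous branch $\e\mapsto\bx(\e)$ with $\bx(0)=a^*$ and $\Psi(\bx(\e),\e)=0$ for $|\e|<\e_*$. For such $\e$ we get $d(\bx(\e),\e)=\e^{l}\Psi(\bx(\e),\e)=0$, hence $\f(\cdot,\bx(\e),\e)$ is the desired $T$-periodic solution, and $\f(0,\e)=\bx(\e)\to a^*$ as $\e\to 0$.

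The main obstacle is the regularity bookkeeping in the factoring step: after dividing the displacement by $\e^{l}$ one must verify that $\Psi$ remains regular enough — continuous in $(\bx,\e)$ and $C^1$ in $\bx$, uniformly near $\e=0$ — to legitimately apply the Implicit Function Theorem \emph{at} $\e=0$. This is precisely where the assumption that $\bg_1,\dots,\bg_{l-1}$ vanish identically on $D$ (rather than only at $a^*$) is essential: it makes $\e^{l}$ a genuine common factor of the whole displacement function on a full neighborhood, so that $\Psi=d/\e^{l}$ has a well-defined limit as $\e\to 0$ in the relevant topology. If one prefers to work under weaker smoothness, the Implicit Function Theorem can be replaced by a Brouwer-degree argument: $\det D\bg_l(a^*)\neq 0$ forces $a^*$ to be an isolated zero of $\bg_l$ of nonzero index, and that index is stable under the $\CO(\e)$ perturbation $\Psi(\cdot,\e)-\bg_l$, which again yields a zero of $\Psi(\cdot,\e)$ converging to $a^*$.
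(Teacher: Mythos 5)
Your argument is correct and is essentially the standard proof of this result: the paper itself does not reprove Theorem~\ref{tap} (it is quoted from \cite{LNT}), but the reduction to the displacement function $d(\bx,\e)=\f(T,\bx,\e)-\bx$, the use of Lemma~\ref{lap} together with the identical vanishing of $\bg_1,\dots,\bg_{l-1}$ to factor out $\e^{l}$, and the application of the Implicit Function Theorem (or, as you note, Brouwer degree, which is what \cite{LNT} actually uses to cover lower regularity) to $\Psi(\bx,\e)=d(\bx,\e)/\e^{l}$ at $(a^*,0)$ is exactly the scheme the paper employs in its own proof of Lemma~\ref{l1}. Your remark on why the $\bg_i$ must vanish identically on a neighborhood, not just at $a^*$, correctly identifies the one point where the factoring could otherwise fail.
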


\section*{Acknowledgements}
We thank to the referee for the helpful comments and suggestions.

The authors thank Espa\c{c}o da Escrita -- Pr\'{o}-Reitoria de Pesquisa -- UNICAMP for the language services provided.

MRC is partially supported by Brazilian FAPESP grants 2018/07344-0 and 2019/05657-4. DDN is partially supported by a Brazilian FAPESP grant 2018/16430-8 and by Brazilian CNPq grants 306649/2018-7 and 438975/2018-9. 

\bibliographystyle{abbrv}
\bibliography{gvd}

\begin{thebibliography}{10}

\bibitem{thesis}
M.~El-Samaloty.
\newblock {\em Averaging and Bifurcation Theory.}
\newblock PhD thesis, 1985.
\newblock Thesis (Ph.D.)--University of Cincinnati.

\bibitem{gh83}
J.~Guckenheimer and P.~Holmes.
\newblock {\em Nonlinear Oscillations, Dynamical Systems, and Bifurcations of
  Vector Fields}.
\newblock Applied Mathematical Sciences 42. Springer-Verlag New York, 1
  edition, 1983.

\bibitem{lhopf}
M.~M. {J}. {E}.~{M}arsden.
\newblock {\em {T}he {H}opf {B}ifurcation and {I}ts {A}pplications}.
\newblock {A}pplied {M}athematical {S}ciences 19. Springer-Verlag New York, 1
  edition, 1976.

\bibitem{k}
Y.~A. Kuznetsov.
\newblock {\em Elements of applied bifurcation theory}, volume 112.
\newblock Springer Science \& Business Media, 2013.

\bibitem{llinovrod}
J.~Llibre, D.~D. Novaes, and C.~A.~B. Rodrigues.
\newblock Averaging theory at any order for computing limit cycles of
  discontinuous piecewise differential systems with many zones.
\newblock {\em Phys. D}, 353/354:1--10, 2017.

\bibitem{LNT}
J.~Llibre, D.~D. Novaes, and M.~A. Teixeira.
\newblock Higher order averaging theory for finding periodic solutions via
  {B}rouwer degree.
\newblock {\em Nonlinearity}, 27(3):563--583, 2014.

\bibitem{N}
J.~I. Neimark.
\newblock Some cases of the dependence of periodic motions on parameters.
\newblock {\em Dokl. Akad. Nauk SSSR}, 129:736--739, 1959.

\bibitem{novaes}
D.~D. Novaes.
\newblock An equivalent formulation of the averaged functions via {B}ell
  polynomials.
\newblock In {\em Extended abstracts {S}pring 2016---nonsmooth dynamics},
  volume~8 of {\em Trends Math. Res. Perspect. CRM Barc.}, pages 141--145.
  Birkh\"auser/Springer, Cham, 2017.

\bibitem{S1}
R.~J. Sacker.
\newblock {\em On invariant surfaces and bifurcation of periodic solutions of
  ordinary differential equations}.
\newblock ProQuest LLC, Ann Arbor, MI, 1964.
\newblock Thesis (Ph.D.)--New York University.

\bibitem{S2}
R.~J. Sacker.
\newblock A new approach to the perturbation theory of invariant surfaces.
\newblock {\em Comm. Pure Appl. Math.}, 18:717--732, 1965.

\bibitem{SVM}
J.~A. Sanders, F.~Verhulst, and J.~A. Murdock.
\newblock {\em Averaging methods in nonlinear dynamical systems}, volume~59.
\newblock Springer, 2007.

\bibitem{V}
F.~Verhulst.
\newblock {\em Nonlinear differential equations and dynamical systems}.
\newblock Springer Science \& Business Media, 2006.

\bibitem{wiggins}
S.~{W}iggins.
\newblock {\em {I}ntroduction to applied nonlinear dynamical systems and
  chaos}, volume~2.
\newblock Springer Science \& Business Media, 2003.

\end{thebibliography}
\end{document}